\documentclass[preprint,12pt]{elsarticle}

\usepackage{hyperref}
\usepackage{xcolor}
\usepackage{amssymb}
\usepackage{amsmath}
\usepackage{amsthm}

\usepackage[english]{babel}
\usepackage{graphicx,epstopdf,epsfig}
\usepackage{amsfonts,epsfig,fancyhdr,graphics}
\newtheorem{theorem}{Theorem}[section]
\newtheorem{lemma}[theorem]{Lemma}
\newtheorem{corollary}[theorem]{Corollary}
\newtheorem{proposition}[theorem]{Proposition}

\usepackage{xcolor}
\newtheorem{example}[theorem]{Example}
\newdefinition{remark}{Remark}

\def\tr{\text{tr}}
\def\Re{\text{Re}}

\journal{Linear Algebra and its Applications}

\begin{document}

\begin{frontmatter}



\title{Zero-trace index of matrices}


\author{Kennett L. Dela Rosa, Eloise P. Misa} 

\affiliation{organization={Institute of Mathematics, University of the Philippines Diliman},
            addressline={C.P. Garcia Ave., UP Campus}, 
            city={Quezon City},
            postcode={1101}, 
            country={Philippines}}

\begin{abstract}
The zero-trace index $t(A)$ of a square complex matrix $A$ is introduced. 
It is defined as the largest integer $k$ such that $A$ admits a $k \times k$ compression with trace zero. 
This concept is analogous to the zero-dilation index $d(A)$ and is closely related to the $k$-numerical range. 
The $k$-numerical range is characterized as an intersection of half-planes, which is used to derive a formula for $t(A)$ that reduces its computation to the Hermitian case. 
Conditions for $t(A) = d(A)$ are also established, and matrices satisfying $t(A) = n - 1$ are characterized. Finally, the zero-trace index is determined for several classes of matrices.
\end{abstract}



\begin{keyword}
	zero-dilation index \sep numerical range \sep $k$-numerical range \sep compression


\MSC[2020] 15A45 \sep 15A60 \sep 47A12 \sep 47A20  
\end{keyword}

\end{frontmatter}



\section{Introduction}
	Let $M_{n,m}$ be the set of all $n\times m$ complex matrices, and use $M_n$ if $n=m$. 
	Let $I_n\in M_n$ denote the identity matrix.
	For $A\in M_n$ and $1\leq k\leq n$, a complex matrix $B\in M_k$ is said to \emph{dilate} to $A$ or $B$ is a \emph{compression} of $A$ if $B=P^*AP$ for some matrix $P\in M_{n,k}$ such that $P^*P=I_k$ or equivalently,
	if $A$ is unitarily similar to a matrix of the form $\begin{bmatrix} B&\ast\\ \ast&\ast \end{bmatrix}$. 
	The \emph{zero-dilation index} $d(A)$ of $A$ is the largest integer $k$ for which the zero matrix $0_k\in M_k$ dilates to $A$. 
	The zero-dilation index was introduced in \cite{GauWangWu2014} and is closely related to a generalization of the classical numerical range called the higher-rank numerical ranges. 
	For $1\leq k\leq n$, the \emph{rank-$k$ numerical range} of $A$ is given by $\Lambda_k(A)=\{\lambda\in\mathbb{C} : \lambda I_k \text{ dilates to } A\}$. 
	Hence, the zero-dilation index of $A$ is the largest $k$ for which $0 \in \Lambda_k(A)$. 
	The higher-rank numerical range was first defined in the context of quantum error correction \cite{ChoiKribsZyczkowski2006, ChoiKribsZyczkowski2006b}. When $k=1$, $\Lambda_1(A)$ is equal to the classical \emph{numerical range} $W(A)$, defined as
	\(
	W(A) = \{ x^* A x : x \in \mathbb{C}^n, \ x^*x = 1 \}
	\).
	It is known that the $\Lambda_k(A)'s$ are always convex \cite{LiSze2008, Woerdeman2008}.
	In particular, Li and Sze in \cite{LiSze2008} showed that the rank-$k$ numerical ranges can be expressed as an intersection of half-planes  
	\begin{equation} \label{rank k int}
	\Lambda_k(A) = \bigcap_{\theta \in [0,2\pi)} \left\{ \lambda \in \mathbb{C} : \mathrm{Re}(e^{i\theta} \lambda) \leq \lambda_k(\mathrm{Re}(e^{i\theta} A)) \right\},
	\end{equation} 
	where for $B\in M_n$, $\Re(B)=\frac{B+B^*}{2}$ denotes the \emph{real part} of $B$, and $\lambda_k(C)$ denotes the $k$th largest eigenvalue of a Hermitian matrix $C\in M_n$.
	With this description, $d(A)$ can be expressed as 
	\begin{equation}\label{d(A) as min}
	\min\left\{k : 0\leq \lambda_k(\Re(e^{i\theta}A)) , \theta\in [0,2\pi)\right\}
	\end{equation}
	(see \cite[Theorem 3.1]{LiSze2008}).
	For a Hermitian matrix $C$, let $i_{\geq 0}(C)$ denote the number of nonnegative eigenvalues of $C$ counting multiplicities. 
	Then (\ref{d(A) as min}) can be written as
	\[
	d(A) = \min \left\{ i_{\geq 0}(\text{Re}(e^{i\theta} A)) : \theta \in \mathbb{R} \right\}
	\]
	\cite[Theorem 2.2]{GauWangWu2014}. 
	For Hermitian $C\in M_n$, $d(C)=\min\{i_{\geq 0}(C),i_{\geq 0}(-C)\}$ \cite[Corollary 2.3]{GauWangWu2014}. 
	Furthermore, $d(A) \leq \left\lceil \frac{n}{2} \right\rceil$ whenever $\dim\ker(\Re(e^{i\theta} A)) \leq 1$ for some $\theta \in \mathbb{R}$ \cite[Corollary 2.5]{GauWangWu2014}.  
	Matrices $A\in M_n$ that satisfy $d(A) = n - 1$ were characterized in \cite[Theorem 3.3]{GauWangWu2014}. 
	In particular, row stochastic matrices and doubly stochastic matrices $A$ with $d(A)=n-1$ were  described in \cite{GauWangWu2016}. 
	The zero-dilation index has been determined for the following classes of matrices: normal matrices, weighted permutation matrices, $S_n$-matrices, companion matrices, KMS matrices, and the block matrix analogues of companion and  KMS matrices \cite{GauWangWu2014, GauWu2014a, GauWu2016, DelaRosa2025}.\\

	In this paper, we define the \emph{zero-trace index} of $A$, denoted $t(A)$, as
	\[
	t(A)=\text{max}\left\{k : \tr(P^*AP)=0 \text{ for some } P\in M_{n,k} \text{ such that } P^*P=I_k \right\}.	
	\]
	That is, the zero-trace index is the largest size of a compression of $A$ whose trace is $0$. Observe that if the zero matrix $0_k$ is a compression of $A$, then $t(A)\geq k$. Hence, $d(A)\leq t(A)$. 
	Similar to the zero-dilation index, the zero-trace index is related to another generalization of the classical numerical range called the $k$-numerical ranges. 
	For $1\leq k\leq n$, the \emph{$k$-numerical range} of $A$ is given by 
	\[
	W_k(A)= \left\{\tr(P^*AP) : P\in M_{n,k}\text{ such that } P^*P=I_k\right\}.
	\]
	Notice that $z\in W_k(A)$ if and only if $z=\sum_{j=1}^k x_j^*Ax_j$ for some orthonormal subset $\{x_1,\dots,x_k\}$ of $\mathbb{C}^n$.
	The zero-trace index can be expressed as the largest integer $k$ such that $0\in W_k(A)$.
	The $k$-numerical range of $A$ was first introduced by Halmos in \citep{Halmos1964} and its convexity was proved by Berger in \citep{Berger1963} (see \cite[Problem 211]{Halmos1982} for a simpler proof). 
	When $k=1$, $W_1(A)$ reduces to the classical numerical range $W(A)$. 
	We follow \cite{MarcusFilippenko1978} and use the non-normalized definition of the $k$-numerical range.\\

	In Section 2, we show that the $k$-numerical range is the intersection of half-planes determined by sums of the eigenvalues of $\Re(e^{i\theta}A)$.  
	Specifically, we show that 
	\begin{equation}\label{Wk int eqn}
	W_k(A) = \bigcap_{\theta \in [0,2\pi)} \left\{ \lambda \in \mathbb{C} : \Re(e^{i\theta} \lambda) \leq \sum_{j=1}^k \lambda_j(\Re(e^{i\theta} A)) \right\}.
	\end{equation}
	In Section 3, we establish fundamental properties of the zero-trace index. We show that $t(A)=d(A)$ when $0$ is an extreme point of $W(A)$ (Theorem \ref{0 extreme point}) and characterize Hermitian matrices $A\in M_3$ that satisfy the equality (Proposition \ref{3x3 hermitian}). We give necessary and sufficient conditions for $t(A)=n-1$ (Theorem \ref{t n-1}) and $t(A\oplus A)=2t(A)$ (Proposition \ref{t(A plus A)}), and show that $t(A\oplus 0_m)=t(A)+m$ (Proposition \ref{t(a+0)}). 
	 In Section 4, we use (\ref{Wk int eqn}) to derive an alternative formula for $t(A)$ (Theorem \ref{t index min}) that is analogous to the description for the zero-dilation index as a minimum value (\cite[Theorem 2.2]{GauWangWu2014}). This relates the compression-based definition of the zero-trace index with an eigenvalue-based characterization. As a consequence, we derive a characterization of $t(A)$ that reduces the computation of the zero-trace index of an arbitrary matrix to that of Hermitian matrices (Corollary \ref{t(a) reduce}). 
	In Section 5, we determine the zero-trace index for normal matrices, KMS matrices, weighted permutation matrices, companion matrices, $S_n$-matrices, and Jordan blocks.

\section{Preliminaries on $k$-numerical ranges}
	
	Let $1\leq k\leq n$. 
	Recall that the $k$-numerical range captures the trace behaviour of $k\times k$ compressions of a matrix.
	We begin by reviewing some properties of the $k$-numerical range  \cite{FillmoreWilliams1969,chikwong1994}.
	\begin{theorem}\label{prop of Wk}
	For $A\in M_n$, the following hold:
	\begin{enumerate}[(i)]
		\item $W_k(\alpha I_n +A)=k\alpha +W_k(A)$ for any $\alpha\in\mathbb{C}$; 
		\item $W_k(\beta A)=\beta W_k(A)$ for any $\beta\in \mathbb{C}$; 
		\item $W_k(\textup{Re}(A))=\textup{Re}(W_k(A))$; 
		\item $W_k(U^*AU)=W_k(A)$ for any unitary $U\in M_n$; 
		\item $W_{k}(A)=\textup{tr}(A)-W_{n-k}(A)$ for $1\leq k\leq n-1$;
		\item \label{Wn property}$W_n(A)=\{\textup{tr}(A)\}$; 
		\item $W_k(A)$ contains each sum $\lambda_{i_1}(A)+\dots+\lambda_{i_k}(A)$ of eigenvalues of $A$, where $\{i_1,\dots,i_k\}\subseteq \{1,\dots,n\}$ are distinct;
		\item \label{Wk for hermitian} If $A$ is Hermitian, then $\displaystyle W_k(A)=\left[\sum_{j=n-k+1}^n \!\!\lambda_j(A), \sum_{j=1}^k \lambda_j(A) \right]$;
		\item \label{nested k-num range} $\frac{1}{k+1} W_{k+1}(A)\subseteq \frac{1}{k} W_k(A)$ for $1\leq k\leq n-1$;
		\item \label{Wk direct sum} For $B\in M_m$ and $1\leq k\leq n+m $,  \\
	\(
	W_k(A\oplus B)=\textup{conv}\bigcup\{W_j(A)+W_{k-j}(B): 0\leq j\leq n, 0\leq k-j\leq m\},
	\)
	where $W_0(C):=\{0\}$ for any square matrix $C$.
	\end{enumerate}
	\end{theorem}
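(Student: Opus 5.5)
Most of the items follow directly from the definition $W_k(A)=\{\tr(P^*AP): P\in M_{n,k},\ P^*P=I_k\}$, so I would treat them in groups and reserve real effort for (viii), (ix) and (x).

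\emph{Definitional items.} For (i) and (ii), I use $\tr(P^*(\alpha I_n+\beta A)P)=k\alpha+\beta\,\tr(P^*AP)$. For (iii), I use $\tr(P^*\Re(A)P)=\Re\,\tr(P^*AP)$. For (iv), the map $P\mapsto UP$ is a bijection of the set of isometries $P\in M_{n,k}$. For (v), I complete $P$ to a unitary $[P\ Q]$ with $Q\in M_{n,n-k}$; then $\tr(P^*AP)+\tr(Q^*AQ)=\tr(A)$, and every $Q$ arises this way, which gives the identity. Item (vi) is the case $k=n$: $P$ is then unitary and the trace is invariant. For (vii), I take a Schur triangularization $A=U^*TU$ and let $P$ consist of the columns of $U^*$ indexed by $i_1,\dots,i_k$. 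Then $P^*AP$ is a principal submatrix of the triangular matrix $T$, so its trace is the corresponding sum of eigenvalues.

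\emph{Item (viii).} Here $\tr(P^*AP)$ is real, and by Ky Fan's maximum principle it lies between the sum of the $k$ smallest and the sum of the $k$ largest eigenvalues. Both endpoints are attained by eigenvector choices. The interval is filled by connectedness: the set of isometries $P$ is connected (it is the image of the unitary group), and the trace map is continuous on it.

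\emph{Item (ix).} Given an orthonormal set $x_1,\dots,x_{k+1}$ with $z=\sum x_j^*Ax_j$, I average over the $k+1$ subsets obtained by omitting one vector. This gives
\[
\frac{1}{k+1}z=\frac1k\cdot\frac{1}{k+1}\sum_{i=1}^{k+1}\sum_{j\ne i}x_j^*Ax_j .
\]
This exhibits $\frac{1}{k+1}z$ as $\frac1k$ times a convex combination of points of $W_k(A)$. Berger's convexity then gives the inclusion.

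\emph{Item (x).} For ``$\supseteq$'', I take $P=P_1\oplus P_2$ with $P_1\in M_{n,j}$ and $P_2\in M_{m,k-j}$, which gives $W_j(A)+W_{k-j}(B)\subseteq W_k(A\oplus B)$; convexity of $W_k(A\oplus B)$ finishes this direction. For ``$\subseteq$'', I would use the support-function characterization: both sides are compact convex, so it suffices to compare $\max\Re(e^{i\theta}\cdot)$ over each set. By (iii) and (viii), the maximum of $\Re(e^{i\theta}z)$ over $W_k(A\oplus B)$ is the sum of the $k$ largest eigenvalues of $\Re(e^{i\theta}A)\oplus\Re(e^{i\theta}B)$. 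These top $k$ eigenvalues consist of $j$ of them from the $A$ block and $k-j$ from the $B$ block. That sum is exactly the maximum over $W_j(A)+W_{k-j}(B)$ for that $j$, so the support functions agree.

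The main obstacle is this support-function step. It requires compactness and convexity of all the sets involved, and it requires (iii) and (viii) to identify $\max_{z\in W_k(C)}\Re(e^{i\theta}z)$ with $\sum_{j=1}^k\lambda_j(\Re(e^{i\theta}C))$.
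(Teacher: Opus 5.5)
Your argument is correct. Note that the paper gives no proof of Theorem \ref{prop of Wk}: it only collects these properties from Fillmore--Williams and from Li's work on $C$-numerical ranges. Your proposal is therefore a self-contained alternative rather than a reconstruction of the paper's argument.

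Items (i)--(vii) are handled completely. The unitary completion $[P\ Q]$ gives both inclusions in (v), and principal submatrices of a Schur form give (vii).

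Your proof of (viii) uses Ky Fan's principle together with connectedness of the set of isometries, as the image of the unitary group. This fills the interval without invoking Berger's theorem, so convexity is needed only in (ix) and (x).

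The averaging identity $\sum_{i=1}^{k+1}\sum_{j\ne i}x_j^*Ax_j=kz$ in (ix) is exactly right.

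For ``$\subseteq$'' in (x), your support-function argument is the same separation technique the paper uses to prove Theorem \ref{Wk as int}. Granting that theorem, (x) reduces to two facts. First, for some admissible $j$, the $k$ largest eigenvalues of $\Re(e^{i\theta}A)\oplus\Re(e^{i\theta}B)$ are the $j$ largest of the first block together with the $k-j$ largest of the second. Second, support functions are additive under Minkowski sums. There is no circularity, since Theorem \ref{Wk as int} itself uses only (ii), (iii), (viii) and Berger's theorem.

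The paper's citation buys brevity. Your version makes Section 2 self-contained modulo Berger's convexity theorem, Ky Fan's maximum principle and Schur triangularization.
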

	
	An important step in expressing $d(A)$ as a minimum value is writing $\Lambda_k(A)$ as an intersection of certain half-planes (see \cite{LiSze2008}). 
	In the following theorem, we show that $W_k(A)$ can be characterized similarly. 
	This geometric description of $W_k(A)$ aligns with existing literature on \emph{$C$-numerical ranges} \cite{chikwong1994}. 
	Furthermore, this formulation allows us to express the zero-trace index as a minimum value, parallel to \cite[Theorem 2.2]{GauWangWu2014}. 
	As a result, we can reduce the computation of $t(A)$ for general complex matrices to the Hermitian case.

	\begin{theorem}\label{Wk as int}
	 Let $A\in M_n$ and $1\leq k\leq n$. Then
	\[ 
	W_k(A) = \bigcap_{\theta \in [0,\, 2\pi)} \left\{ \lambda \in \mathbb{C} : \mathrm{Re}(e^{i\theta} \lambda) \leq \sum_{j=1}^k \lambda_j(\mathrm{Re}(e^{i\theta} A)) \right\}.
	\]
	\end{theorem}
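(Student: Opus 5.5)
The plan is to prove the two inclusions separately. Both rest on two earlier facts: the convexity of $W_k(A)$ (Berger's theorem, cited in the introduction) and the Hermitian description in Theorem \ref{prop of Wk}(\ref{Wk for hermitian}). Write $H_\theta=\Re(e^{i\theta}A)$ and let $S_\theta$ be the closed half-plane $\{\lambda : \Re(e^{i\theta}\lambda)\le \sum_{j=1}^k\lambda_j(H_\theta)\}$.

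\emph{The inclusion $W_k(A)\subseteq\bigcap_\theta S_\theta$.} Let $\lambda=\tr(P^*AP)\in W_k(A)$ with $P^*P=I_k$, and fix $\theta$. By parts (ii) and (iii) of Theorem \ref{prop of Wk},
\[
\Re(e^{i\theta}\lambda)=\tr(P^*H_\theta P)\in W_k(H_\theta).
\]
Part (\ref{Wk for hermitian}) then shows this value is at most $\sum_{j=1}^k\lambda_j(H_\theta)$. Alternatively, this is Ky Fan's maximum principle. Hence $\lambda\in S_\theta$ for every $\theta$.

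\emph{The inclusion $\bigcap_\theta S_\theta\subseteq W_k(A)$.} Here I use that $W_k(A)$ is compact and convex: it is convex by Berger's theorem, and compact as the continuous image of the compact Stiefel manifold. A compact convex set in $\mathbb{C}\cong\mathbb{R}^2$ equals the intersection of its supporting half-planes
\[
\{\lambda : \Re(e^{i\theta}\lambda)\le h(\theta)\},\qquad h(\theta)=\max_{\mu\in W_k(A)}\Re(e^{i\theta}\mu),
\]
over all $\theta$; this is the standard separation (Hahn--Banach) argument. It therefore suffices to show that $h(\theta)=\sum_{j=1}^k\lambda_j(H_\theta)$. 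From the computation above,
\[
h(\theta)=\max W_k(H_\theta)=\sum_{j=1}^k\lambda_j(H_\theta),
\]
again by part (\ref{Wk for hermitian}). The maximum is attained by taking $P$ whose columns are orthonormal eigenvectors for the $k$ largest eigenvalues of $H_\theta$. So each $S_\theta$ is exactly a supporting half-plane of $W_k(A)$, and the two sets coincide.

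The only genuinely nontrivial ingredient is the convexity of $W_k(A)$, which we take from the cited literature. The step needing care is the separation argument. Take $\lambda\notin W_k(A)$. Separation gives a real linear functional, which can be written as $\mu\mapsto\Re(e^{i\theta}\mu)$ for some $\theta$ after scaling, strictly separating $\lambda$ from $W_k(A)$. Then $\Re(e^{i\theta}\lambda)>h(\theta)$, so $\lambda\notin S_\theta$. This completes the second inclusion.
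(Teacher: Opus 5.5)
Your proof is correct and follows essentially the same route as the paper. The forward inclusion comes from $\mathrm{Re}(e^{i\theta}W_k(A))=W_k(\mathrm{Re}(e^{i\theta}A))$ together with the Hermitian interval description. The reverse inclusion separates a point outside the closed convex set $W_k(A)$ by a half-plane, and uses orthonormal eigenvectors for the $k$ largest eigenvalues of $\mathrm{Re}(e^{i\theta}A)$ to show that $\sum_{j=1}^k\lambda_j(\mathrm{Re}(e^{i\theta}A))$ is attained in $W_k(A)$. Phrasing this through the support function $h(\theta)$ only repackages the paper's contrapositive argument.
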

	\begin{proof}
	Let $w \in W_k(A)$. Fix $\theta \in [0,2\pi)$. 
	By Theorem \ref{prop of Wk} (ii) and (iii),
	\[
	\Re(e^{i\theta} w) \in \Re(e^{i\theta}(W_k(A))) 
	= \Re(W_k(e^{i\theta}A)) 
	= W_k(\Re(e^{i\theta}A)).
	\]
	Since $\Re(e^{i\theta}A)$ is Hermitian, Theorem \ref{prop of Wk} (viii) implies
	\[
	W_k(\Re(e^{i\theta}A)) 
	= \left[
	\sum_{j=n-k+1}^{n} \lambda_{j}(\Re(e^{i\theta}A)),
	\ \sum_{j=1}^{k} \lambda_j(\Re(e^{i\theta}A))
	\right].
	\]
	In particular,
	\[
	\Re(e^{i\theta} w) \le \sum_{j=1}^{k} \lambda_j(\Re(e^{i\theta}A)).
	\]
	Hence,
	\[
	W_k(A) \subseteq 
	\bigcap_{\theta \in [0,2\pi)} 
	\left\{
	v \in \mathbb{C} : 
	\Re(e^{i\theta} v) \le 
	\sum_{j=1}^{k} \lambda_j(\Re(e^{i\theta}A))
	\right\}.
	\]

	Conversely, by way of contraposition, suppose $w \notin W_k(A)$. Since $W_k(A)$ is a closed convex set \cite{Berger1963}, there exists $\varphi \in [0,2\pi)$ such that
	\[
	 \Re(e^{i\varphi} w) > \Re(e^{i\varphi} z)
	\quad \text{for all } z \in W_k(A).
	\]
	Let $\{x_1,\dots, x_k\}$ be an orthonormal set such that $x_j$ is an eigenvector of $\Re(e^{i\varphi}A)$ corresponding to 
	\(
	\lambda_j(\Re(e^{i\varphi}A)) \text{ for each } j \in \{1,\dots,k\}.
	\)
	Note that
	\(
	\sum_{j=1}^k x_j^* A x_j \in W_k(A),
	\)
	and
	\[
	\sum_{j=1}^k \lambda_j(\Re(e^{i\varphi}A))
	= \sum_{j=1}^k x_j^* \Re(e^{i\varphi}A)x_j 
	= \Re\left(e^{i\varphi} \sum_{j=1}^k x_j^* A x_j\right) 
	< \Re(e^{i\varphi} w).
	\]
	Thus,
	\[
	w \notin \left\{ v \in \mathbb{C} : \Re(e^{i\varphi} v) \le \sum_{j=1}^k \lambda_j(\Re(e^{i\varphi}A)) \right\}.
	\]
	It follows that
	\[
	w \notin \bigcap_{\theta \in [0,2\pi)} 
	\left\{ v \in \mathbb{C} : \Re(e^{i\theta} v) \le \sum_{j=1}^k \lambda_j(\Re(e^{i\theta}A)) \right\},
	\]
	and so the claim follows. 
	\end{proof}
	
\section{Basic properties of the zero-trace index}	

	We start by establishing basic properties of the zero-trace index. 
	Following the notations in \cite{GauWangWu2014}, let
	$i_{+}(C)$ (resp., $i_{-}(C)$ and $i_{0}(C)$) denote the number of positive (resp., negative and zero) eigenvalues of a Hermitian matrix $C$, counting multiplicities. Then $i_{\geq0}(C)=i_{+}(C)+i_{0}(C)$ is the number of nonnegative eigenvalues of $C$.
	
	\begin{remark}
	If $0\notin W(A)$, then $0\notin W_k(A)$ for all $1\leq k\leq n$ by Theorem \ref{prop of Wk} (ix). In this case, we set $t(A)=0$. Hereinafter, we assume $0\in W(A)$. 
	\end{remark}
	
	\begin{proposition} \label{t(A) properties}
	Let $A\in M_n$ and $B\in M_m$. The following hold:
	\begin{enumerate}[(i)]
	\item $d(A)\leq t(A)$;
	\item $t(A)=n$ if and only if $\textup{tr}(A)=0$;
	\item $t(A)=t(U^*AU)$ for any unitary $U\in M_n$;
	\item if $A$ is unitarily similar to $-A$, then $t(A)=n$;
	\item $t(A\oplus B)\geq t(A)+t(B)$.
	\end{enumerate}		
	\end{proposition}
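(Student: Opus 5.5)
The five items are essentially immediate from the definitions, so the plan is to verify them one at a time using the description $t(A)=\max\{k : 0\in W_k(A)\}$ together with Theorem \ref{prop of Wk}.

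For (i), if $0_k$ dilates to $A$, say $P^*AP=0_k$ with $P^*P=I_k$, then $\tr(P^*AP)=0$, so $0\in W_k(A)$. Taking $k=d(A)$ gives $t(A)\ge d(A)$.

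For (ii), note that $W_n(A)=\{\tr(A)\}$ by Theorem \ref{prop of Wk}(\ref{Wn property}). Hence $t(A)=n$ if and only if $0\in W_n(A)$, which happens if and only if $\tr(A)=0$.

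For (iii), Theorem \ref{prop of Wk}(iv) gives $W_k(U^*AU)=W_k(A)$ for every $k$. The set of $k$ with $0\in W_k(\cdot)$ is therefore the same for $A$ and $U^*AU$, and so are the maxima.

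For (iv), suppose $-A=U^*AU$. Then $\tr(A)=\tr(U^*AU)=-\tr(A)$, so $\tr(A)=0$, and (ii) gives $t(A)=n$.

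For (v), first dispose of the degenerate cases. If $t(A)=0$ (including the case $0\notin W(A)$, where the convention sets $t(A)=0$), the bound reads $t(A\oplus B)\ge t(B)$. This follows by taking $j=0$ in Theorem \ref{prop of Wk}(\ref{Wk direct sum}), or directly by padding an isometry for $B$ with zero rows. The case $t(B)=0$ is symmetric, and if both vanish there is nothing to prove.

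Otherwise let $p=t(A)$ and $q=t(B)$, both positive. Choose isometries $P\in M_{n,p}$ and $Q\in M_{m,q}$ with $\tr(P^*AP)=0=\tr(Q^*BQ)$. Then $R=P\oplus Q\in M_{n+m,p+q}$ satisfies $R^*R=I_{p+q}$ and
\[
\tr(R^*(A\oplus B)R)=\tr(P^*AP)+\tr(Q^*BQ)=0,
\]
so $0\in W_{p+q}(A\oplus B)$ and $t(A\oplus B)\ge p+q$. Equivalently, this is the inclusion $W_p(A)+W_q(B)\subseteq W_{p+q}(A\oplus B)$ from Theorem \ref{prop of Wk}(\ref{Wk direct sum}).

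None of these steps is a real obstacle. The only point needing attention is the zero-index convention in (v), which the case split above handles.
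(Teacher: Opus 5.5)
Your proof is correct and follows the paper's argument item by item: (i) directly from the definitions, (ii) from $W_n(A)=\{\tr(A)\}$, (iii) from the unitary invariance of $W_k$, (iv) by deducing $\tr(A)=0$, and (v) by using the block isometry $P\oplus Q$. The one difference is that you treat the degenerate case $t(A)=0$ or $t(B)=0$ in (v) explicitly, whereas the paper's proof leaves it implicit by writing $P\in M_{n,t(A)}$ even when $t(A)=0$.
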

	\begin{proof}
	Let $A\in M_n$ and $B\in M_m$.
	\begin{enumerate}[(i)]
	\item This follows from the definition of $t(A)$.
	\item Since $W_n(A)=\{\tr(A)\}$, it follows that $0\in W_n(A)$ if and only if $\tr(A)=0$.
	\item This is immediate from Theorem \ref{prop of Wk} (iv).
	\item If $A$ is unitarily similar to $-A$, then $\tr(A)=0$ and so $t(A)=n$.
	\item Since $0\in W_{t(A)}(A), W_{t(B)}(B)$, there exist $P\in M_{n,t(A)}$ and $Q\in M_{m,t(B)}$ such that $P^*P=I_{t(A)}$, $Q^*Q=I_{t(B)}$, and $\tr(P^*AP)=0=\tr(Q^*BQ)$.
	Consider $P\oplus Q\in M_{n+m, t(A)+t(B)}$ with $(P\oplus Q)^*(P\oplus Q)=P^*P\oplus Q^*Q=I_{t(A)}\oplus I_{t(B)}=I_{t(A)+t(B)}$. 
	Moreover, note that $\tr\left((P\oplus Q)^*(A\oplus B)(P\oplus Q)\right)$ can be simplified to
	\(
	\tr(P^*AP)+\tr(Q^*BQ)=0.
	\)
	Thus, by definition, $0\in W_{t(A)+t(B)}(A\oplus B)$, and so $t(A\oplus B)\geq t(A)+t(B)$. \qedhere
	\end{enumerate} 
	\end{proof}
	
	By Proposition \ref{t(A) properties} (i), $d(A)\leq t(A)$. We investigate when $d(A)=t(A)$. 
	Recall that a point $z$ in a convex set $S$ is an extreme point of $S$ if, whenever $z=\lambda x+(1-\lambda)y$ for some $x,y\in S$ and $\lambda\in(0,1)$, we have $x=y=z$ (see \cite{Watrous2018}).
	It was shown in \citep[Proposition 2.1]{GauWangWu2014} that if $0\in\partial W(A)$, then
	\[
	d(A)\leq \dim\text{span}\{x\in\mathbb{C}^n : x^*Ax=0\},
	\]
	with equality if and only if $0$ is an extreme point of $W(A)$. 
	The next theorem reveals a broader role for this geometric condition, showing that it serves as a sufficient condition for the equality of the two indices.

	\begin{theorem}\label{0 extreme point}
	Let $A\in M_n$. If  $0$ is an extreme point of $W(A)$, then $t(A)\leq \dim \textup{span}\{x\in \mathbb{C}^n : x^*Ax=0\}$ and $t(A)=d(A)$. 
	\end{theorem}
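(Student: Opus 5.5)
Our plan is to show that when $0$ is an extreme point of $W(A)$, every orthonormal family whose compression has trace zero consists of vectors $x$ with $x^*Ax=0$. The bounds $d(A)\le t(A)$ (Proposition \ref{t(A) properties} (i)) and $d(A)=\dim\text{span}\{x: x^*Ax=0\}$ (the equality case of \citep[Proposition 2.1]{GauWangWu2014}) then close the argument.

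Set $k=t(A)$ and pick an orthonormal set $\{x_1,\dots,x_k\}$ with $\sum_{j=1}^k x_j^*Ax_j=0$. Each $z_j:=x_j^*Ax_j$ lies in $W(A)$, so
\[
0=\frac1k\sum_{j=1}^k z_j
\]
expresses $0$ as a convex combination of points of $W(A)$ with equal positive weights. Because $0$ is extreme, every $z_j$ must equal $0$. To see this, suppose $z_1\neq 0$ and $k\ge 2$. Write
\[
0=\tfrac1k z_1+\tfrac{k-1}{k}y, \qquad y=\tfrac1{k-1}\sum_{j\ge2}z_j .
\]
Here $y\in W(A)$ by convexity of $W(A)$ (the case $k=1$ is trivial). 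Extremality forces $z_1=y=0$, a contradiction. Hence every $x_j$ lies in $\{x\in\mathbb{C}^n : x^*Ax=0\}$. Since the $x_j$ are linearly independent,
\[
t(A)=k\le \dim\text{span}\{x\in\mathbb{C}^n: x^*Ax=0\}.
\]
This is the first claim.

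For the second claim, note that an extreme point of the compact convex set $W(A)$ lies on its boundary, so $0\in\partial W(A)$. The equality case of \citep[Proposition 2.1]{GauWangWu2014} then gives $d(A)=\dim\text{span}\{x : x^*Ax=0\}$. Combining,
\[
d(A)\le t(A)\le \dim\text{span}\{x: x^*Ax=0\}=d(A),
\]
so $t(A)=d(A)$.

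The only delicate points are applying extremality correctly, which is handled by splitting off one term as above, and confirming that an extreme point is a boundary point so that the hypothesis of the cited proposition holds. Both are routine. The cited equality statement does the real work.
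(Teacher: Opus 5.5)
Your proposal is correct and follows the paper's own argument. The equal-weight convex combination together with extremality forces each $x_j^*Ax_j=0$, and the equality case of Proposition 2.1 in \cite{GauWangWu2014} closes the chain $d(A)\le t(A)\le \dim\text{span}\{x : x^*Ax=0\}=d(A)$. Your splitting-off step and your check that an extreme point lies in $\partial W(A)$ supply details the paper leaves implicit.
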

	\begin{proof}
	Let $t=t(A)$ and $K=\{x\in \mathbb{C}^n : x^*Ax=0\}$.
	Let $\{x_1,\dots,x_{t}\}\subset \mathbb{C}^n$ be orthonormal such that 
	$0=\sum_{j=1}^{t} x_j^*Ax_j\in W_{t}(A)$.
	Since $x_j^*Ax_j\in W(A)$ for each $1\leq j\leq t$, and $W(A)$ is convex, it follows that
	\[	
	0 = \frac{1}{t}\sum_{j=1}^{t} x_j^*Ax_j
	= \sum_{j=1}^{t} \frac{1}{t} x_j^*Ax_j
	\in W(A).
	\] 
	As $0$ is an extreme point of $W(A)$, we must have $x_j^*Ax_j=0$ for all $1\leq j\leq t$. 
	Thus, $\{x_1,\dots,x_t\}\subset K$ and so $t\leq \dim \text{span}\, K$. 
	By \cite[Proposition 2.1]{GauWangWu2014}, $d(A)=\dim \text{span}\, K$. We have $d(A)\leq t(A)\leq \dim \text{span}\, K=d(A)$, and hence $d(A)=t(A)$. 
	\end{proof}
	
	The converse of Theorem \ref{0 extreme point} is false.
	Take $A=\text{diag}(1,-2,-3)\in M_3$. 
	Then $0\in W(A)=[-3,1]$ and $0\notin W_2(A)=[-5,-1]$ by Theorem \ref{prop of Wk} (viii), so that $1\leq d(A)\leq t(A)= 1$, or $d(A)=t(A)=1$. 
	Moreover, 
	$t(A)<\dim \text{span}\{x\in \mathbb{C}^3 : x^*Ax=0\}=3$.	
	However, $0$ is not an extreme point of $W(A)$. \\
	
	We now show that when $A\in M_n$ is normal,  the converse of Theorem \ref{0 extreme point} holds precisely when $d(A)=t(A)=\dim\ker(A)$. 
	We also describe when the equality $d(A)=t(A)$ holds for a Hermitian matrix $A\in M_3$.

	\begin{corollary}
	Let $A\in M_n$ be normal. Then $0$ is an extreme point of $W(A)$ if and only if $d(A)=t(A)=\dim\ker(A)$.  
	\end{corollary}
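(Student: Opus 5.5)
The plan is to use that for normal $A$ we have $W(A)=\mathrm{conv}\,\sigma(A)$, which is a convex polygon, and to work in an orthonormal eigenbasis: by Proposition \ref{t(A) properties} (iii) we may assume $A=0_m\oplus D$, where $m=\dim\ker(A)$ and $D=\mathrm{diag}(\mu_1,\dots,\mu_{n-m})$ has only nonzero diagonal entries. Throughout I use the standing assumption $0\in W(A)$ from the Remark.

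\emph{Forward direction.} Suppose $0$ is an extreme point of $W(A)$. Every extreme point of $\mathrm{conv}\,\sigma(A)$ is an eigenvalue, so $m\geq 1$. Theorem \ref{0 extreme point} gives $d(A)=t(A)=\dim\mathrm{span}\,K$, where $K=\{x : x^*Ax=0\}$, so it remains to show $\dim\mathrm{span}\,K=m$. Since $0$ is an extreme point of a polygon, there is a supporting line of $W(A)$ meeting it only at $0$. Hence there is $\theta$ with $\Re(e^{i\theta}\mu_j)>0$ for every $j$. Now let $x=(y,z)\in K$ with $y\in\mathbb{C}^m$ and $z\in\mathbb{C}^{n-m}$. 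Then
\[
0=\Re\bigl(e^{i\theta}x^*Ax\bigr)=\sum_j \Re(e^{i\theta}\mu_j)\,|z_j|^2 ,
\]
which forces $z=0$. So $K=\ker(A)$, and $\dim\mathrm{span}\,K=m$.

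\emph{Converse.} Suppose $d(A)=t(A)=m$. Since $0\in W(A)$ gives $t(A)\ge 1$, we have $m\ge 1$, so $0$ is an eigenvalue. Assume, for contradiction, that $0$ is not extreme. Then $0=\lambda a+(1-\lambda)b$ for some $a,b\in W(A)\setminus\{0\}$ and $\lambda\in(0,1)$.
\begin{enumerate}
\item Write $a$ and $b$ as convex combinations of eigenvalues. This expresses $0$ as a convex combination of eigenvalues in which the total weight on the zero eigenvalues is less than $1$, because $a\neq 0$.
\item Discard the zero eigenvalues and renormalize. This gives $0=\sum_j c_j\mu_j$ with $c_j\ge 0$ and $\sum_j c_j=1$.
\item Set $x=0_m\oplus(\sqrt{c_1},\dots,\sqrt{c_{n-m}})^T$. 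This is a unit vector orthogonal to $\ker(A)$ with $x^*Ax=\sum_j c_j\mu_j=0$.
\item Adjoin $x$ to an orthonormal basis of $\ker(A)$. This gives $m+1$ orthonormal vectors $x_1,\dots,x_{m+1}$ with $\sum_i x_i^*Ax_i=0$, so $0\in W_{m+1}(A)$ and $t(A)\ge m+1$.
\end{enumerate}
This contradicts $t(A)=m$, so $0$ is an extreme point. Only the hypothesis $t(A)=\dim\ker(A)$ is actually used here.

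The main obstacle is the convex-geometry bookkeeping. In the forward direction one must get a strict separating direction at the vertex $0$; this relies on $W(A)$ being a polytope, for which extreme points are exposed points. In the converse one must check that non-extremality yields a representation of $0$ using only nonzero eigenvalues. Both are standard facts about convex hulls of finite sets, so I would state them briefly.
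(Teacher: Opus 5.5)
Your proof is correct, and your converse takes a different route from the paper.

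\textbf{Forward direction.} This matches the paper: diagonalize, show $K=\ker(A)$, and invoke Theorem~\ref{0 extreme point}. The paper shows that the components on nonzero eigenvalues vanish by noting that otherwise $0$ would be a convex combination of nonzero eigenvalues. You use an exposing direction $\theta$ instead. The two are equivalent, but the paper's version does not need the fact that extreme points of a polygon are exposed. A small point: Theorem~\ref{0 extreme point} as stated only gives $d(A)=t(A)\le\dim\mathrm{span}\,K$. Equality comes from \cite[Proposition 2.1]{GauWangWu2014}, or more simply from $d(A)\ge m$, since the compression of $A$ to $\ker(A)$ is $0_m$.

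\textbf{Converse.} The paper uses the hypothesis $d(A)=\dim\ker(A)$. It quotes \cite[Theorem 4.1 (a)]{GauWangWu2014} to get that $0$ is not in the convex hull of the nonzero eigenvalues, and then \cite[Theorem 1.6.8]{HornJohnson1991} for extremality. You use only $t(A)=\dim\ker(A)$ and argue the contrapositive by an explicit construction. This makes the argument self-contained, while the paper's version is shorter given the literature. Your bookkeeping is right: $a\neq0$ and $\lambda>0$ force the total weight on zero eigenvalues below $1$, so the renormalization is legitimate.

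\textbf{A stronger conclusion.} Your construction gives more than you use. Since $A=0_m\oplus D$, the first $m$ rows and columns of $A$ vanish. So the compression of $A$ to $\mathrm{span}\{e_1,\dots,e_m,x\}$ is actually $0_{m+1}$, not merely traceless, and you get $d(A)\ge m+1$. Thus your argument reproves the direction of \cite[Theorem 4.1 (a)]{GauWangWu2014} that the paper relies on. It also shows that, under the standing assumption $0\in W(A)$, either equality $d(A)=\dim\ker(A)$ or $t(A)=\dim\ker(A)$ alone already forces $0$ to be an extreme point.
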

	\begin{proof}
	Let $A\in M_n$ be normal.
	Set $t=t(A)$, $k=\dim\ker(A)$, and $K=\{x\in \mathbb{C}^n : x^*Ax=0\}$.
	If $A=0_n$, then the assertion follows.
	Suppose $A\neq 0_n$.
	If $0$ is an extreme point of $W(A)$, then $0$ is an eigenvalue of $A$ \cite{MacCluer1965} (see \cite[Section 1.6 Problem 17]{HornJohnson1991}), and $0<k<n$. 
	Clearly, $\ker(A)\subseteq K$.
	By the Spectral theorem, there exist unitary $U\in M_n$ and $a_1,\dots,a_{n}\in\mathbb{C}$ with $a_{n-k+1}=\dots=a_n=0$ such that $A=U^*\text{diag}(a_1,\dots,a_n)U$. 
	Let $x\in K$ and $Ux=[y_j]_{j=1}^n$. We have
	\[
	0=x^*Ax
	= x^*U^*\text{diag}(a_1,\dots,a_n)Ux
	= \sum_{j=1}^n |y_j|^2a_j
	= \sum_{j=1}^{n-k} |y_j|^2a_j.
	\]
	If $y_j\neq 0$ for some $1\leq j\leq n-k$, then we have $0=\sum_{j=1}^{n-k} \frac{|y_j|^2}{|y_1|^2+\dots+|y_{n-k}|^2} a_j$, that is, $0$ is a nontrivial convex combination of elements of $W(A)$. 
	This contradicts the assumption that $0$ is an extreme point of $W(A)$. Thus, $y_j=0$ for all $1\leq j\leq n-k$, and so 
	$Ax=U^*\text{diag}(a_1,\dots,a_n)(Ux)=U^*0_{n,1}=0_{n,1}$.
	Hence,  $x\in \ker(A)$ and $K\subseteq\text{ker}(A)$. 
	Thus, we have $\ker(A)=K$.
	Moreover, $\ker(A)=K$ implies $k\leq d(A)=t(A)\leq k$ or $d(A)=t(A)=\dim\ker(A)$ by \cite[Theorem 4.1]{GauWangWu2014} and Theorem \ref{0 extreme point}.
	Now, suppose $d(A)=t(A)=\dim\ker(A)$. 
	It follows from \cite[Theorem 4.1 (a)]{GauWangWu2014} that $0$ is not in the convex hull of the nonzero eigenvalues of $A$. 
	Hence, $0$ is an extreme point of $W(A)$ by \cite[Theorem 1.6.8]{HornJohnson1991}.
	\end{proof}

	\begin{proposition}\label{3x3 hermitian}
	Let $A\in M_3$ be Hermitian with eigenvalues $\lambda_1\geq \lambda_2\geq \lambda_3$. Then $d(A)=t(A)$ if and only if one of the following holds:
	\begin{enumerate}[(i)]
	\item $A=0_3$;
	\item $0$ is an eigenvalue of $A$ and $\textup{tr}(A)\neq 0$;
	\item $\lambda_1+\lambda_2<0$ or $\lambda_2+\lambda_3>0$.
	\end{enumerate}
	\end{proposition}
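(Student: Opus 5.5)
Since $A$ is Hermitian, Theorem \ref{prop of Wk} (viii) gives both indices explicitly in terms of $\lambda_1\ge\lambda_2\ge\lambda_3$:
\[
0\in W_1(A)\iff \lambda_3\le 0\le\lambda_1,\qquad
0\in W_2(A)\iff \lambda_2+\lambda_3\le 0\le \lambda_1+\lambda_2,
\]
and $0\in W_3(A)$ if and only if $\operatorname{tr}(A)=0$. For the zero-dilation index I use $d(A)=\min\{i_{\ge0}(A),i_{\ge0}(-A)\}$ from \cite[Corollary 2.3]{GauWangWu2014}. I also keep the standing convention that $t(A)=0$ when $0\notin W(A)$, i.e. when $A$ is definite. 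In that case $d(A)=0$ as well, since one of $i_{\ge0}(\pm A)$ vanishes. The plan is to tabulate $d$ and $t$ by cases on the sign pattern of the eigenvalues and check when they agree.

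First the case where $0$ is an eigenvalue. If $A=0_3$ then $d=t=3$. Otherwise, up to replacing $A$ by $-A$ (both indices and conditions (i)--(iii) are invariant under negation, after swapping the two clauses of (iii)), the pattern is $(+,0,0)$, $(+,+,0)$ or $(+,0,-)$.
\begin{itemize}
\item For $(+,0,0)$ and $(+,+,0)$ we have $d=1$. The trace is nonzero, so $t\le 2$. Moreover $\lambda_2+\lambda_3\ge 0$ holds with equality only in the pattern $(+,0,0)$. So $t=2$ for $(+,0,0)$, and $t=1$ for $(+,+,0)$.
\item For $(+,0,-)$ we have $d=\min(2,2)=2$. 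Here $\lambda_1+\lambda_2>0$ and $\lambda_2+\lambda_3<0$, so $t\ge 2$, and $t=3$ exactly when $\operatorname{tr}(A)=0$.
\end{itemize}
I must compare these with the statement. In the pattern $(+,0,0)$ the trace is nonzero, yet $d=1\neq 2=t$. This is inconsistent with (ii) read literally, unless (iii) is meant to cover it. It is not covered, since there $\lambda_2+\lambda_3=0$. So the verification must be done carefully, and I expect to have to recheck (iii) against these borderline cases. My guess is that the intended condition (ii) tacitly concerns the case $i_0(A)=1$ with mixed signs, or that the pattern $(+,0,0)$ is excluded by some convention; I will flag and resolve this when writing the proof.

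Next the case where $0$ is not an eigenvalue. Definite matrices give $d=t=0$ and satisfy (iii), since then $\lambda_2+\lambda_3>0$ or $\lambda_1+\lambda_2<0$. Otherwise, up to negation, the pattern is $(+,+,-)$, so $d=\min(2,1)=1$. Then $t=1$ iff $0\notin W_2(A)$, iff $\lambda_2+\lambda_3>0$ (because $\lambda_1+\lambda_2>0$ automatically). So in this pattern $d=t$ iff (iii) holds. By symmetry the pattern $(+,-,-)$ gives $d=t$ iff $\lambda_1+\lambda_2<0$.

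Finally I assemble both directions from the table, checking that each condition (i)--(iii) forces equality and that every unequal case violates all three. The main obstacle is the boundary analysis with zero eigenvalues, namely the pattern $(+,0,0)$ above: it must be reconciled with (ii) before the equivalence can be claimed as stated.
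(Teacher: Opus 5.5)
Your proposal has a genuine gap, and it comes from a miscalculation, not from a defect in the statement. In the pattern $(+,0,0)$, i.e.\ $\lambda_1>0=\lambda_2=\lambda_3$, you assert $d(A)=1$. But the two zero eigenvalues of $-A$ count as nonnegative. So $i_{\geq 0}(A)=3$ and $i_{\geq 0}(-A)=2$, which gives $d(A)=\min\{3,2\}=2$. You can also see this directly: $A$ is unitarily similar to $\mathrm{diag}(0,0,\lambda_1)$, whose leading $2\times 2$ block is $0_2$, and $0_3$ is not a compression because $\mathrm{tr}(A)\neq 0$. You correctly found $t(A)=2$ in this pattern, so $d(A)=t(A)=2$, exactly as (ii) predicts. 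The ``inconsistency'' you flag does not exist, and the statement is correct as written. Your speculation that (ii) tacitly refers to $i_0(A)=1$, or that $(+,0,0)$ is excluded by some convention, should be dropped. As submitted, the proposal does not prove the proposition: it ends by saying the equivalence cannot yet be claimed.

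Once this entry is corrected, your sign-pattern table finishes the proof with no new ideas. With a zero eigenvalue, $d=t$ in every pattern except $(+,0,-)$ with $\mathrm{tr}(A)=0$, where $d=2<3=t$. In that exceptional case (i), (ii) and (iii) all fail, since $\lambda_1+\lambda_2=\lambda_1>0$ and $\lambda_2+\lambda_3=\lambda_3<0$. Every other zero-eigenvalue case satisfies (i) or (ii). Without a zero eigenvalue, definite matrices give $d=t=0$ and satisfy (iii). The patterns $(+,+,-)$ and $(+,-,-)$ give $d=t$ exactly when (iii) holds, as you showed. Negation invariance, which you noted, covers the mirrored patterns.

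Compared with the paper, your route differs in organization. The paper argues by the common value $d(A)=t(A)\in\{3,2,1\}$ and uses \cite[Lemma 3.1]{GauWangWu2014} and the rank-$2$ numerical range result of \cite{ChoiKribsZyczkowski2006}. You instead tabulate $d=\min\{i_{\geq0}(A),i_{\geq0}(-A)\}$ against the explicit intervals $W_k(A)$. That is more self-contained and handles the definite case $d=t=0$ explicitly.
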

	\begin{proof}
	For $1\leq k\leq 3$, $W_k(A)=\left[\sum_{j=4-k}^3 \lambda_j, \sum_{j=1}^k \lambda_j\right]$. 
	Note that $d(A)=t(A)=3$ if and only if $A=0_3$. 
	If  $d(A)=t(A)=2$, then $0$ is an eigenvalue of $A$ by \citep[Lemma 3.1]{GauWangWu2014} and $\tr(A)\neq 0$ since $0\notin W_3(A)$.
	Conversely, suppose $0$ is an eigenvalue of $A$ and $\tr(A)\neq 0$. It follows from $\tr(A)\neq 0$ that $1\leq d(A)\leq t(A)\leq 2$ and $0$ has multiplicity at most $2$. 
	If $0$ is an eigenvalue of multiplicity 2, then $2\leq d(A)\leq t(A)\leq 2$ and so $d(A)=t(A)=2$. 
	If $0$ is an eigenvalue of multiplicity $1$, then $\left(i_{\geq 0}(A),i_{\geq 0}(-A)\right)\in \left\{(3,1), (1,3), (2,2)\right\}$. 
	If $\left(i_{\geq 0}(A),i_{\geq 0}(-A)\right)=(3,1)$, then $\lambda_1\geq \lambda_2>0=\lambda_3$ so that $\lambda_2+\lambda_3>0$. 
	Thus, $0\notin W_2(A)$ and $d(A)=t(A)=1$. 
	Similarly, if $\left(i_{\geq 0}(A),i_{\geq 0}(-A)\right)=(1,3)$, then $\lambda_1=0> \lambda_2\geq \lambda_3$ so that $\lambda_1+\lambda_2<0$. 
	Hence, $0\notin W_2(A)$ and $d(A)=t(A)=1$. 
	Lastly, if $\left(i_{\geq 0}(A),i_{\geq 0}(-A)\right)=(2,2)$, then $\lambda_1>0=\lambda_2>\lambda_3$. 
	By \citep[Theorem 2.4]{ChoiKribsZyczkowski2006}, $0\in \Lambda_2(A)=\{0\}$, and we have $d(A)=t(A)=2$. 
	Lastly, $d(A)=t(A)=1$ if and only if $0\notin W_2(A)$, that is, $\lambda_1+\lambda_2<0$ or $\lambda_2+\lambda_3>0$.
		\end{proof}
	
	Proposition \ref{t(A) properties} (ii) characterizes matrices that satisfy $t(A)=n$. 
	 We are interested in characterizing matrices $A\in M_n$ such that $t(A)=n-1$ (see \citep[Theorem 3.3]{GauWangWu2014} for the zero-dilation index analogue). 

\begin{theorem}\label{t n-1}
	Let $A\in M_n$. Then $t(A)=n-1$ if and only if $\textup{tr}(A)\neq 0$ and $\textup{tr}(A)\in W(A)$.
\end{theorem}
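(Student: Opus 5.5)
The plan is to reduce the condition $t(A)=n-1$ to a membership statement about $W_{n-1}(A)$ and then convert it, via Theorem \ref{prop of Wk} (v), into a statement about $W_1(A)=W(A)$.

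First, note that $t(A)=n-1$ means exactly two things. The first is that $0\notin W_n(A)$. The second is that $0\in W_{n-1}(A)$.

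By Theorem \ref{prop of Wk} (vi) and Proposition \ref{t(A) properties} (ii), the first condition is equivalent to $\tr(A)\neq 0$. The definition of $t(A)$ as a maximum then gives $t(A)\le n-1$, and $t(A)=n-1$ holds precisely when $0\in W_{n-1}(A)$. For $n\ge 2$ this is a well-defined condition. For $n=1$, the statement reads: $t(A)=0$ if and only if $A\neq 0$ and $\tr(A)\in W(A)=\{A\}$. This is consistent with the convention $t(A)=0$ when $0\notin W(A)$, so the case can be checked directly. I will handle it separately, or note that the property (v) argument with $W_0=\{0\}$ covers it.

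Second, for $n\ge 2$, I apply Theorem \ref{prop of Wk} (v) with $k=1$. This gives $W_{n-1}(A)=\tr(A)-W_1(A)=\tr(A)-W(A)$. Hence
\[
0\in W_{n-1}(A)\iff 0\in \tr(A)-W(A)\iff \tr(A)\in W(A).
\]
Combining the two steps yields both directions of the equivalence simultaneously.

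I do not expect a serious obstacle; the only delicate point is bookkeeping. The definition of $t(A)$ as a maximum must guarantee that $0\in W_{n-1}(A)$ together with $0\notin W_n(A)$ gives exactly $t(A)=n-1$, with no need for any nesting of the sets $W_k$. The degenerate size $n=1$ also needs care, together with the standing convention that $0\in W(A)$. When $\tr(A)\in W(A)$ and $n\ge2$, the conclusion $0\in W_{n-1}(A)$ already forces $0\in W(A)$ by Theorem \ref{prop of Wk} (ix), so the convention is consistent.
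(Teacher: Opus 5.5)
Your proposal is correct and takes the same route as the paper. Like the paper, you split $t(A)=n-1$ into $0\notin W_n(A)$ and $0\in W_{n-1}(A)$, then use Theorem \ref{prop of Wk} (vi) and (v) to rewrite these as $\tr(A)\neq 0$ and $\tr(A)\in W(A)$. Your separate check of the case $n=1$, where (v) does not apply, is a bit of care the paper leaves implicit.
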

	\begin{proof}
	Note that $t(A)=n-1$ if and only if $0\in W_{n-1}(A)$ and $0\notin W_n(A)$. Equivalently, by Theorem \ref{prop of Wk} (v) and (vi), $0\in \tr(A)-W(A)$ and $\tr(A)\neq 0$, which holds if and only if  $\tr(A)\in W(A)$ and $\tr(A)\neq 0$.   
	\end{proof}

	Proposition \ref{t(A) properties} (v) gives a lower bound $t(A\oplus B)\geq t(A)+t(B)$, and equality does not always hold. Indeed, even for the case $A=B$, equality is not guaranteed.
	For example, take $A=\text{diag}(1,-2)$. 
	By Theorem \ref{prop of Wk} (\ref{Wk for hermitian}) and (ii), $0\in W(A)=[-2,1]$ and $t(A)=1$, respectively. 
	Consider $A\oplus A=\text{diag}(1,-2,1,-2)$, which is Hermitian, and we have $W(A\oplus A)=[-2,1],\  W_2(A\oplus A)=[-4,2],\ W_3(A\oplus A)=[-3,0]$, and $W_4(A\oplus A)=\{-2\}$. 
	By definition, $t(A\oplus A)=3>2t(A)$.\\
	
	In the following result, we give a necessary and sufficient condition for the equality $t(A\oplus A)=2t(A)$ to hold when $A\in M_n$ is Hermitian.
	
	\begin{proposition}\label{t(A plus A)}
	Let $A\in M_n$ be Hermitian with eigenvalues $\lambda_1\geq \cdots\geq \lambda_n$. Then $t(A\oplus A)=2t(A)$ if and only if one of the following holds:
	\begin{enumerate}[(i)]
	\item $\textup{tr}(A)=0$;
	\item $2\sum_{j=1}^{t(A)}\lambda_j + \lambda_{t(A)+1}< 0$ or $\,2\sum_{j=n-t(A)+1}^{n}\lambda_j + \lambda_{n-t(A)}> 0$.
	\end{enumerate}
	\end{proposition}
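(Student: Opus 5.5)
The plan is to reduce everything to the interval description of $k$-numerical ranges of Hermitian matrices, Theorem \ref{prop of Wk} (\ref{Wk for hermitian}), applied to $A\oplus A$. Write $t=t(A)$. Since $A\oplus A$ is Hermitian, its eigenvalues in nonincreasing order are $\lambda_1,\lambda_1,\lambda_2,\lambda_2,\dots,\lambda_n,\lambda_n$. For each $k$ this gives the explicit interval
\[
W_k(A\oplus A)=\Big[\sum_{j=2n-k+1}^{2n}\mu_j,\ \sum_{j=1}^k\mu_j\Big],
\]
where $\mu_1\geq\cdots\geq\mu_{2n}$ is that doubled list. In particular, when $t+1\leq n$,
\[
\sum_{j=1}^{2t+1}\mu_j=2\sum_{j=1}^{t}\lambda_j+\lambda_{t+1},
\qquad
\sum_{j=2n-2t}^{2n}\mu_j=2\sum_{j=n-t+1}^{n}\lambda_j+\lambda_{n-t}.
\]

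\textbf{Case $\tr(A)=0$.} Then $t=n$ by Proposition \ref{t(A) properties} (ii). Also $\tr(A\oplus A)=0$, so $t(A\oplus A)=2n=2t$, and the equivalence holds in this case.

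\textbf{Case $\tr(A)\neq 0$.} Now $t\leq n-1$, so $\lambda_{t+1}$ and $\lambda_{n-t}$ are defined and condition (ii) makes sense. By Proposition \ref{t(A) properties} (v) we already have $t(A\oplus A)\geq 2t$. Equality therefore holds if and only if $0\notin W_k(A\oplus A)$ for every $k$ with $2t+1\leq k\leq 2n$.

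The key step is to observe that it suffices to check $k=2t+1$. By Theorem \ref{prop of Wk} (\ref{nested k-num range}), $\frac{1}{m}W_m(C)\subseteq\frac1kW_k(C)$ for $m\geq k$. Hence if $0\notin W_k(C)$, then $0\notin W_m(C)$ for all $m\geq k$, so the condition "$0\notin W_k$" is monotone in $k$. Consequently $t(A\oplus A)=2t$ if and only if $0\notin W_{2t+1}(A\oplus A)$.

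By the interval formula, $0\notin W_{2t+1}(A\oplus A)$ means that either the right endpoint is negative or the left endpoint is positive. Using the two displayed sums, this is exactly
\[
2\sum_{j=1}^{t}\lambda_j+\lambda_{t+1}<0
\quad\text{or}\quad
2\sum_{j=n-t+1}^{n}\lambda_j+\lambda_{n-t}>0,
\]
which is condition (ii).

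For the converse direction of the whole statement, note that if (ii) holds then $\tr(A)\neq 0$ is not needed separately. Indeed, if $\tr(A)=0$ then $t=n$ and (i) already applies. Otherwise the argument above runs as written.

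The only delicate points are bookkeeping. First, we must correctly identify which entries of the doubled spectrum make up the top and bottom $2t+1$ terms: pairs $\lambda_1,\dots,\lambda_t$ appear twice, plus one copy of $\lambda_{t+1}$, and symmetrically at the bottom. Second, we must handle the degenerate case $t=0$, where by the Remark the set $W(A)$ may not contain $0$. There condition (ii) reads $\lambda_1<0$ or $\lambda_n>0$, which is consistent with $0\notin W(A\oplus A)=W(A)$. I expect no genuine obstacle beyond invoking the monotonicity from (\ref{nested k-num range}) cleanly.
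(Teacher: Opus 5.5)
Your proposal is correct and follows the paper's proof. You handle $\tr(A)=0$ directly, and for $\tr(A)\neq 0$ you use $t(A\oplus A)\geq 2t(A)$ to reduce to $0\notin W_{2t(A)+1}(A\oplus A)$, which you read off the interval formula for the doubled spectrum. Beyond the paper, you spell out the monotonicity argument via the nested property (ix) showing only $k=2t(A)+1$ needs checking, the endpoint bookkeeping, and the $t(A)=0$ case, all of which the paper leaves implicit.
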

	\begin{proof}
	If $\tr(A)=0$, then $\tr(A\oplus A)=0$. It follows from Proposition \ref{t(A) properties} (ii) that $t(A\oplus A)=2n=2t(A)$.
	Suppose $\tr(A)\neq 0$. 
	Then $t(A\oplus A)=2t(A)$ if and only if $0\notin W_{2t(A)+1}(A\oplus A)$ by Proposition \ref{t(A) properties} (v).
	The latter condition is equivalent to $2\sum_{j=1}^{t(A)}\lambda_j + \lambda_{t(A)+1}< 0$ or $\,2\sum_{j=n-t(A)+1}^{n}\lambda_j + \lambda_{n-t(A)}> 0$ by Theorem \ref{prop of Wk} (\ref{Wk for hermitian}).
	\end{proof}
	
	It remains to be determined if a  characterization for the direct summands $A$ and $B$ exists for the equality to hold in Proposition \ref{t(A) properties} (v).
	In contrast, a necessary and sufficient  condition when $d(A\oplus B)=d(A)+d(B)$ is known (\citep[Corollary 2.8]{GauWangWu2014}).\\

	Although the zero-trace index is not additive with respect to direct sums in general, the following result shows that equality holds whenever either of the summands is the zero matrix. We make use of an equivalent characterization of the $k$-numerical range from \cite[Theorem 1.2]{FillmoreWilliams1969} as
	\begin{equation}\label{W_k as tr(AX)}
	W_k(A)=\left\{\tr(AP) : P\in M_n, 0\leq P\leq I_n, \tr(P)=k\right\},
	\end{equation}
	where, for $X,Y\in M_n$, $X\leq Y$ if $Y-X$ is positive semidefinite.\\
	
	\begin{proposition}\label{t(a+0)}
	Let $A\in M_n$. Then $t(A\oplus 0_m)=t(A)+m$.
	\end{proposition}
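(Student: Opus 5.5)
The plan is to prove the two inequalities $t(A\oplus 0_m)\geq t(A)+m$ and $t(A\oplus 0_m)\leq t(A)+m$ separately.

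The lower bound is immediate from Proposition \ref{t(A) properties} (v) and (ii). Since $\textup{tr}(0_m)=0$, we have $t(0_m)=m$. Hence $t(A\oplus 0_m)\geq t(A)+t(0_m)=t(A)+m$. (If $0\notin W(A)$, so $t(A)=0$, we still have $t(A\oplus 0_m)\geq m$ directly from the compression onto the last $m$ coordinates.)

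For the upper bound, set $k=t(A\oplus 0_m)$. We want to show that $0\in W_{k-m}(A)$ when $k>m$; if $k\leq m$ there is nothing to prove. I would use the direct sum formula, Theorem \ref{prop of Wk} (\ref{Wk direct sum}), together with $W_{i}(0_m)=\{0\}$ for $0\le i\le m$. This gives
\[
W_k(A\oplus 0_m)=\textup{conv}\bigcup\{W_j(A): \max(0,k-m)\leq j\leq \min(n,k)\}.
\]
So $0$ is a convex combination of points $w_j\in W_j(A)$ with $j\geq k-m$.

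The key step is to push each such point down to level $k-m$ while preserving the convex combination. Theorem \ref{prop of Wk} (\ref{nested k-num range}) gives $\frac{1}{j}W_j(A)\subseteq \frac{1}{k-m}W_{k-m}(A)$, but rescaling by $\frac{k-m}{j}$ with different $j$ distorts the convex weights. This is harmless here because we only need $0$: write $0=\sum_j c_j w_j$ and set $u_j=\frac{k-m}{j}w_j\in W_{k-m}(A)$. Then $0=\sum_j c_j\frac{j}{k-m}u_j$ is a positive combination of points of the convex set $W_{k-m}(A)$. Dividing by the positive total weight $\sum_j c_j j/(k-m)$ shows $0\in W_{k-m}(A)$.

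Alternatively, the upper bound can be proved via (\ref{W_k as tr(AX)}). Given $0\le P\le I_{n+m}$ with trace $k$ and $\textup{tr}((A\oplus 0_m)P)=0$, the compressed block $P_{11}$ satisfies $0\le P_{11}\le I_n$, $\textup{tr}(AP_{11})=0$, and $\textup{tr}(P_{11})=s\geq k-m$. The same scaling argument applied to $\frac{1}{s}W_s$-type sets, or a direct convexity argument in the real trace parameter, then finishes. I expect the main obstacle to be handling non-integer or mixed levels carefully, and the positive-combination trick above resolves it. The case $k-m=0$ is trivial.
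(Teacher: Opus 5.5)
Your proof is correct, but for the upper bound it takes a different route from the paper. The paper argues by contradiction. From $0=\sum_j\alpha_jw_j$ with $w_j\in W_j(A)$ and $j\geq t-m\geq t(A)+1$, it uses the Fillmore--Williams description (\ref{W_k as tr(AX)}) to write $w_j=\tr(AP_j)$ with $0\leq P_j\leq I_n$ and $\tr(P_j)=j$. It then merges these into one positive contraction $P=\sum_j\alpha_jP_j$ with $\tr(AP)=0$ and $\tr(P)\geq t(A)+1$, and rescales $P$ to the integer trace $\lfloor\tr(P)\rfloor$, obtaining $0\in W_{\lfloor\tr(P)\rfloor}(A)$. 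You instead push each $w_j$ down to level $k-m$ separately with Theorem \ref{prop of Wk} (ix), and absorb the resulting distortion of the weights by renormalizing. This works exactly because the target point is $0$ and the total weight $\sum_j c_j\,j/(k-m)\geq 1$ is positive. Your version is direct rather than by contradiction, and it lands on level $k-m$ itself rather than on the uncontrolled level $\lfloor\tr(P)\rfloor$. It also uses only properties already listed in Theorem \ref{prop of Wk} together with convexity of $W_{k-m}(A)$. The paper's rescaling $P\mapsto\frac{\lfloor\tr(P)\rfloor}{\tr(P)}P$ is in effect an inline proof of (ix) via (\ref{W_k as tr(AX)}). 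Your alternative via the block $P_{11}$ is shorter still, since $\tr((A\oplus 0_m)P)=\tr(AP_{11})$ makes the direct-sum formula (x) unnecessary. To finish it you do not need any ``$W_s$-type sets'' for non-integer $s$: since $0<\frac{k-m}{s}\leq 1$, the matrix $\frac{k-m}{s}P_{11}$ satisfies $0\leq\frac{k-m}{s}P_{11}\leq I_n$, has trace $k-m$, and gives $\tr\left(A\,\frac{k-m}{s}P_{11}\right)=0$. Hence $0\in W_{k-m}(A)$ by (\ref{W_k as tr(AX)}). The paper also gives a second proof in Section 4, via Corollary \ref{t(a) reduce} and the identity $\ell_{\geq 0}(C\oplus 0_m)=\ell_{\geq 0}(C)+m$ for Hermitian $C$.
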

	\begin{proof}
	By Theorem \ref{prop of Wk} (\ref{Wk direct sum}), 
	\begin{align}\label{W_k A oplus 0m}
	W_k(A\oplus 0_m) &= \text{conv}\bigcup\{W_j(A)+W_{k-j}(0_m): 0\leq j\leq n,\ 0\leq k-j\leq m\}\nonumber\\
	&= \text{conv}\bigcup\{W_j(A)+0 : 0\leq j\leq n,\  0\leq k-j\leq m\}\nonumber\\
	&= \text{conv}\bigcup\left\{W_j(A) : \max\{0,k-m\}\leq j\leq \min\{k,n\}\right\}.
	\end{align}
	Let $t=t(A\oplus 0_m)$ and, by way of contradiction and Proposition \ref{prop of Wk} (v), suppose $t> t(A)+m$. 
	Then $t-m \geq  t(A)+1$ and $0\in W_t(A\oplus 0_m)$. Let $r=\min\{t,n\}$. 
	It follows from (\ref{W_k A oplus 0m}) that $0=\sum_{j=t-m}^r \alpha_j w_j$, where  $\alpha_j\geq 0$ and $w_j\in W_j(A)$ for each $t-m\leq j\leq r$  and $\sum_{j=t-m}^r \alpha_j=1$.
	For each $t-m\leq j\leq r$, $w_j=\tr(AP_j)$ where $P_j\in M_n$ such that $0\leq P_j\leq I_n$ and $\tr(P_j)=j$ by (\ref{W_k as tr(AX)}).
	Consider $P=\sum_{j=t-m}^r \alpha_j P_j\in M_n$. 
	For any unit vector $x\in \mathbb{C}^n$, note that $x^*Px=\sum_{j=t-m}^r \alpha_j x^*P_jx \in [0,1]$ since each $P_j$ is positive semidefinite with eigenvalues at most 1. Hence, $0\leq P\leq I_n$, and
	\begin{equation}\label{tr(P)}
	\tr(P)=\sum_{j=t-m}^r \alpha_j \tr(P_j) = \sum_{j=t-m}^r \alpha_j j \geq \sum_{j=t-m}^r \alpha_j (t(A)+1) = t(A)+1.
	\end{equation}
	Let $Q=\frac{\lfloor \tr(P)\rfloor}{\tr(P)}P$. Since $0<\frac{\lfloor \tr(P)\rfloor}{\tr(P)}\leq 1$, it follows that $0\leq Q\leq I_n$ and $\tr(Q)=\frac{\lfloor \tr(P)\rfloor}{\tr(P)}\tr(P)= \lfloor \tr(P)\rfloor \geq t(A)+1$ by (\ref{tr(P)}).
	Moreover, 
	\begin{align*}
	0 = \frac{\lfloor \tr(P)\rfloor}{\tr(P)}\sum_{j=t-m}^r \alpha_j w_j
	&= \frac{\lfloor \tr(P)\rfloor}{\tr(P)}\sum_{j=t-m}^r \alpha_j \tr(AP_j)\\
	&= \frac{\lfloor \tr(P)\rfloor}{\tr(P)}\tr\left(\sum_{j=t-m}^r \alpha_j AP_j\right)\\
	&= \tr\left(A \left(\frac{\lfloor \tr(P)\rfloor}{\tr(P)}\sum_{j=t-m}^r \alpha_jP_j\right)\right)\\
	&= \tr(AQ)\\
	&\in W_{\tr(Q)}(A),
	\end{align*}
	which contradicts the maximality of $t(A)$. Thus, $t(A\oplus 0_m)= t(A)+m$.
	\end{proof}
	
	
\section{Zero-trace index as a minimum value}

	For a Hermitian matrix $C\in M_n$, let $\ell_{+}(C)$ (resp., $\ell_{-}(C)$ and $\ell_{0}(C)$) be the number of positive (resp., negative and zero) partial sums of the form $\sum_{j=1}^{k}\lambda_j(C)$, for $1\leq k\leq n$, counting multiplicities. 
	We further define
	\[
	\ell_{\geq0}(C)=\ell_{+}(C)+\ell_{0}(C)
	\quad\text{and}\quad
	\ell_{\leq0}(C)=\ell_{-}(C)+\ell_{0}(C),
	\]
	which count the nonnegative and nonpositive partial sums of the ordered eigenvalues of $C$, respectively.

	\begin{example}\label{example}
	Let
	\(
	C=\textup{diag}(0,1,-1,-1)\in M_4,
	\)
	whose eigenvalues satisfy
	\(
	\lambda_1(C)=1,\, \lambda_2(C)=0,\, \lambda_3(C)=-1,\, \lambda_4(C)=-1.
	\)
	Then
	\(
	i_{+}(C)=1, i_{0}(C)=1, i_{-}(C)=2,
	\)
	so that
	\(
	i_{\geq0}(C)=2.
	\)
	The partial sums of the ordered eigenvalues are
	\[
	\begin{array}{c|c|c|c|c}
	 & k=1 & k=2 & k=3 & k=4 \\
	 \hline
	 \sum_{j=1}^k \lambda_j(C) & 1 & 1 & 0 & -1 
	\end{array}.
	\]	
	Hence,
	\(
	\ell_{+}(C)=2,\, \ell_{-}(C)=1,\, \ell_{0}(C)=1,
	\)
	and consequently
	$\ell_{\geq0}(C)=3$ and $\ell_{\leq0}(C)=2$.\\

	Meanwhile, $-C=\textup{diag}(0,-1,1,1)$ with $\lambda_j(-C)=-\lambda_{5-j}$ for $1\leq j\leq 4$.
	We have that
	\(
	i_{+}(-C)=2,\, i_{0}(-C)=1,\, i_{-}(-C)=1.
	\)
	The partial sums of the ordered eigenvalues are
	\[
	\begin{array}{c|c|c|c|c}
	 & k=1 & k=2 & k=3 & k=4 \\
	 \hline
	 \sum_{j=1}^k \lambda_j(-C) & 1 & 2 & 2 & 1 
	\end{array}.
	\]	
	so that 
	\(
	\ell_{+}(-C)=4,\, \ell_{-}(-C)=0,\, \ell_{0}(-C)=0,
	\)
	and $\ell_{\geq 0}(-C)=4$.
	\end{example}

	\begin{remark}\label{Remark}
	Let $C\in M_n$ be Hermitian.
	\begin{enumerate}[(i)]
	\item Note that $\ell_+(C)+\ell_0(C)+\ell_-(C)=n$.
	\item Since $\lambda_1(C)\geq \cdots\geq  \lambda_{i_{\geq 0}}(C)\geq 0$, then
	\(
	\sum_{j=1}^{i_{\geq0}(C)}\lambda_j(C)\ge0,
	\)
	and so 
	\(
	\ell_{\geq0}(C)\ge i_{\geq0}(C).
	\)
	
	\item Since $\sum_{j=1}^n \lambda_j(C)=\tr(C)$, $\ell_0(C)\geq 1$ whenever $\tr(C)=0$. 
	Moreover, $\tr(C)=0$ implies $\ell_{\geq 0}(C)=n$.

	\item Although $i_{+}(-C)=i_{-}(C)$, $i_0(C)=i_0(-C)$, and $i_-(-C)=i_+(C)$, the analogous identities $\ell_{+}(-C)=\ell_{-}(C)$, $\ell_0(C)=\ell_0(-C)$, and $\ell_-(-C)=\ell_+(C)$ need not hold as illustrated in Example \ref{example}.
	In general, since $\lambda_k(-C)=\lambda_{n-k+1}(C)$, $\ell_+(-C)$ counts the number of negative tail sums $\sum_{j=n-k+1}^n \lambda_j(C)$, while $\ell_-(C)$ counts the number of negative head sums $\sum_{j=1}^k \lambda_j(C)$, which is why there is no simple equality between them.
	\end{enumerate}
	\end{remark}	
	
	In the next lemma, we note an elementary property of the partial sums of eigenvalues arranged in decreasing order.
	
	\begin{lemma}\label{ell_geq 0 lemma}
	Let $C\in M_n$ be Hermitian. If $\sum_{j=1}^k \lambda_j(C)<0$ for some $1\leq k\leq n$, then $\sum_{j=1}^m \lambda_j(C)<0$ for any $m$ such that $k\leq m\leq n$. In this case, $\ell_{\geq 0}(C)\leq k-1$.
	\end{lemma}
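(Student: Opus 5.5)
The plan is to exploit the monotonicity of the ordered eigenvalues. Write $\lambda_j=\lambda_j(C)$ and $s_m=\sum_{j=1}^m \lambda_j$, and suppose $s_k<0$ for some $1\leq k\leq n$.

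The first step is to show that $\lambda_k<0$. Since $\lambda_1\geq\cdots\geq\lambda_k$, we have $k\lambda_k\leq s_k<0$, so $\lambda_k<0$. For any $j\geq k$ it then follows that $\lambda_j\leq\lambda_k<0$.

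Next, I would argue by induction on $m$, or directly by writing
\[
s_m=s_k+\sum_{j=k+1}^m \lambda_j,
\]
which is a negative number plus a sum of negative (or, if $m=k$, no) terms. Hence $s_m<0$ for every $k\leq m\leq n$.

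For the counting claim, the partial sums $s_k,s_{k+1},\dots,s_n$ are all negative, so only $s_1,\dots,s_{k-1}$ can be nonnegative. By the definition of $\ell_{\geq0}(C)$ as the number of nonnegative partial sums among $s_1,\dots,s_n$, this gives $\ell_{\geq0}(C)\leq k-1$.

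There is no real obstacle here. The only point needing care is the observation that $s_k<0$ forces $\lambda_k<0$, and the averaging inequality $k\lambda_k\leq s_k$ handles that.
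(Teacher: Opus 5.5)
Your proof is correct and is essentially the paper's argument. The paper argues by contradiction that a nonnegative later partial sum $s_m$ would force some $\lambda_l>0$ with $l>k$, hence $\lambda_1,\dots,\lambda_k>0$ and $s_k>0$. This is the contrapositive of your direct observation that $s_k<0$ forces $\lambda_k<0$ (via $k\lambda_k\leq s_k$), hence $\lambda_j<0$ for all $j\geq k$, after which the bound $\ell_{\geq 0}(C)\leq k-1$ follows exactly as you state.
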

	\begin{proof}
	Suppose $\sum_{j=1}^k \lambda_j(C)<0$ for some $1\leq k\leq n$. 
	If $\sum_{j=1}^m \lambda_j(C)\geq 0$ for some $k< m\leq n$, then
	$0> \sum_{j=1}^k \lambda_j(C) \geq -\sum_{j=k+1}^m \lambda_j(C)$,
	which implies $\sum_{j=k+1}^m \lambda_j(C)>0$. 
	Then $\lambda_l(C)>0$ for some $k+1\leq l\leq m$, and so $\lambda_j(C)>0$ for all $1\leq j\leq k<l$. 
	Consequently, $\sum_{j=1}^k \lambda_j(C)>0$ which is a contradiction.
	By definition of $\ell_{\geq 0}(C)$, we have $\ell_{\geq 0}(C)\leq k-1$
	\end{proof}

	We now state the main result of this section, which gives a spectral characterization of the zero-trace index, expressing it as the minimum number of nonnegative partial sums of the ordered eigenvalues of $\Re(e^{i\theta}A)$.

	\begin{theorem} \label{t index min} 
	For any $A\in M_n$, we have 
	\[
	t(A)=\min\left\{\ell_{\geq 0}(\textup{Re}(e^{i\theta}A)):\theta\in[0,2\pi)\right\}.
	\]
	\end{theorem}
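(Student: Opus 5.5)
The plan is to combine the half-plane description of $W_k(A)$ in Theorem \ref{Wk as int} with the monotonicity of partial sums in Lemma \ref{ell_geq 0 lemma}. Both quantities in the statement will turn out to be lengths of initial segments of $\{1,\dots,n\}$.

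For $\theta\in[0,2\pi)$ and $1\le k\le n$, write $S_k(\theta)=\sum_{j=1}^k\lambda_j(\Re(e^{i\theta}A))$. Taking $\lambda=0$ in Theorem \ref{Wk as int} gives
\[
0\in W_k(A)\iff S_k(\theta)\ge 0 \text{ for all } \theta\in[0,2\pi).
\]

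The first step is to show that $\{k: S_k(\theta)\ge 0\}=\{1,\dots,\ell_{\geq0}(\Re(e^{i\theta}A))\}$ for each fixed $\theta$. This is the empty set when $\ell_{\geq 0}=0$. By Lemma \ref{ell_geq 0 lemma}, once a partial sum $S_k(\theta)$ is negative, every later partial sum $S_m(\theta)$ with $m\ge k$ is also negative. Hence the indices with nonnegative partial sums precede all indices with negative ones. So they form an initial segment, and its length is exactly $\ell_{\geq0}(\Re(e^{i\theta}A))$. This is the main point to check carefully, since $\ell_{\geq 0}$ is defined as a count rather than as a largest index. The lemma is precisely what lets us treat the count as a threshold: $S_k(\theta)\ge0 \iff k\le \ell_{\geq0}(\Re(e^{i\theta}A))$.

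The second step combines the two displays. It gives $0\in W_k(A)$ if and only if $k\le \ell_{\geq0}(\Re(e^{i\theta}A))$ for every $\theta$, that is, if and only if $k\le m$, where $m=\min_\theta \ell_{\geq0}(\Re(e^{i\theta}A))$. The minimum exists because the values are integers in $\{0,\dots,n\}$. So the set of $k\ge1$ with $0\in W_k(A)$ is exactly $\{1,\dots,m\}$. Its maximum, which is $t(A)$ by definition, therefore equals $m$ when $m\ge1$.

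The remaining case is $m=0$. Then some $\theta$ has $S_1(\theta)<0$, so by the first display $0\notin W_1(A)=W(A)$. By the convention in the Remark preceding Proposition \ref{t(A) properties}, $t(A)=0=m$. Conversely, if $0\notin W(A)$, the same equivalence forces some $S_1(\theta)<0$, so $m=0$. The two cases together give the formula.

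The downward closedness of $\{k:0\in W_k(A)\}$, which also follows from Theorem \ref{prop of Wk} (ix), is recovered automatically by this argument and need not be invoked separately.
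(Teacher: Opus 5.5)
Your proof is correct and rests on the same two ingredients as the paper's: Theorem \ref{Wk as int} at $\lambda=0$ and Lemma \ref{ell_geq 0 lemma}. The difference is only in packaging. You prove the threshold equivalence $0\in W_k(A)\iff k\le m$ directly, which treats $t(A)=0$ and $t(A)=n$ uniformly and gives $\{k:0\in W_k(A)\}=\{1,\dots,m\}$ for free; the paper instead handles $t=n$ separately and proves $m\le t$ and $t\le m$ as two inequalities, using the lemma and nestedness only implicitly in the second.
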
	
	
	\begin{proof}
	Let $t=t(A)$ and $m=\min\{\ell_{\geq 0}(\Re(e^{i\theta}A)):\theta\in[0,2\pi)\}$. 
	If $t=n$, then $0\in W_n(A)$ by Proposition \ref{t(A) properties} (ii), and so $\sum_{j=1}^n \lambda_j(\Re(e^{i\theta}A))\geq 0$ for all $\theta\in[0,2\pi)$ by Theorem \ref{Wk as int}. It follows that $\ell_{\geq 0}(\Re(e^{i\theta}A))=n$ for all $\theta\in[0,2\pi)$, and thus $m=n=t(A)$. 
	Assume $t\leq n-1$.
	By definition of $t(A)$, $0\in W_t(A)$ but $0\notin W_{t+1}(A)$.
	Then there exists $\theta'\in[0,2\pi)$ such that $\sum_{j=1}^{t+1} \lambda_j(\Re(e^{i\theta'} A))<0$ by Theorem \ref{Wk as int}.	 
	Hence, $\ell_{\geq 0}(\Re(e^{i\theta'} A))\leq t$ by Lemma \ref{ell_geq 0 lemma}, and so $m\leq t$. 
	Now, by the minimality of $m$,  there exists $\theta_0\in [0,2\pi)$ such that 
	\[
	\sum_{j=1}^m \lambda_j(\Re(e^{i\theta_0} A))\geq 0 \text{ and } \sum_{j=1}^{m+1} \lambda_j(\Re(e^{i\theta_0} A)) < 0. 
	\]
	Indeed, if $\sum_{j=1}^{m+1} \lambda_j(\Re(e^{i\theta} A))\geq 0$ for all $\theta\in [0,2\pi)$, then $\ell_{\geq 0}(\Re(e^{i\theta}A))\geq m+1$. 
	Consequently, $\min\{\ell_{\geq 0}(\Re(e^{i\theta}A)):\theta\in[0,2\pi)\}\geq m+1$ which contradicts the choice of $m$.
	It follows from Theorem \ref{Wk as int} that $0\notin W_{m+1}(A)$, and so $t\leq m$.	
	Therefore, $m=t=t(A)$.
	\end{proof}

	Several corollaries follow from the preceding theorem. The following result provides an explicit description of $t(A)$ for Hermitian matrices in terms of only $\ell_{\geq 0}(A)$ and $\ell_{\geq 0}(-A)$ (see \cite[Corollary 2.3]{GauWangWu2014} for $d(A)$ analogue).

	\begin{corollary}\label{t(a) Hermitian}
	If $A\in M_n$ is Hermitian, then 
	\[
	t(A)=\min\left\{\ell_{\geq 0}(A),\, \ell_{\geq 0}(-A)\right\}.
	\]
	\end{corollary}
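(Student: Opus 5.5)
We plan to combine Theorem \ref{t index min} with the fact that, for Hermitian $A$, $\Re(e^{i\theta}A)=\cos\theta\, A$. Write $c=\cos\theta\in[-1,1]$. For $c>0$ the ordered eigenvalues of $cA$ are $c\lambda_j(A)$. Each partial sum $\sum_{j=1}^k\lambda_j(cA)$ is then $c$ times the corresponding partial sum of $A$, so its sign is unchanged and $\ell_{\geq0}(cA)=\ell_{\geq0}(A)$. For $c<0$ we write $cA=|c|(-A)$, and the same argument gives $\ell_{\geq0}(cA)=\ell_{\geq0}(-A)$.

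The remaining case is $c=0$, that is, $\theta=\pi/2$ or $3\pi/2$. Then $\Re(e^{i\theta}A)=0_n$, all partial sums vanish, and $\ell_{\geq0}(0_n)=n$. Since every $\ell_{\geq0}$ value is at most $n$ (Remark \ref{Remark}(i)), this value never lowers the minimum.

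The set $\{\ell_{\geq0}(\Re(e^{i\theta}A)):\theta\in[0,2\pi)\}$ is therefore exactly $\{\ell_{\geq0}(A),\ell_{\geq0}(-A),n\}$, where $\theta=0$ realizes the first value and $\theta=\pi$ the second. Applying Theorem \ref{t index min} then gives $t(A)=\min\{\ell_{\geq0}(A),\ell_{\geq0}(-A)\}$.

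The only point needing care is that positive scaling preserves the decreasing order of eigenvalues. This holds because $\lambda_j(cB)=c\lambda_j(B)$ for $c>0$, so the $k$th partial sum scales by $c$ and keeps its sign. No step here is a genuine obstacle.
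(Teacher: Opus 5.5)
Your proposal is correct and matches the paper's proof. Both split $\theta$ by the sign of $\cos\theta$, identify $\ell_{\geq 0}(\Re(e^{i\theta}A))$ as $\ell_{\geq 0}(A)$, $n$, or $\ell_{\geq 0}(-A)$ in the three cases, and then apply Theorem~\ref{t index min}; writing $\cos\theta\,A=|\cos\theta|(-A)$ for $\cos\theta<0$ is a slightly tidier version of the paper's eigenvalue re-indexing.
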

	\begin{proof}
	Let $A\in M_n$ be Hermitian. 
	Since $\Re(e^{i\theta}A)=\cos(\theta)A$ for all $\theta\in[0,2\pi)$, the eigenvalues of $\Re(e^{i\theta}A)$ are $\{\cos(\theta)\lambda_1(A),\dots,\cos(\theta)\lambda_n(A)\}$. 
	If $\theta=\frac{\pi}{2}$ or $\frac{3\pi}{2}$, then $\cos(\theta)=0$ and so all the eigenvalues of $\Re(e^{i\theta}A)$ are $0$. 
	Hence, $\ell_{+}(\Re(e^{i\theta}A))=0$, $\ell_{0}(\Re(e^{i\theta}A))=n$, and so $\ell_{\geq 0}(\Re(e^{i\theta}A))=n$. 
	If $\theta\in \left[\left.0,\frac{\pi}{2}\right.\right)\cup \left(\frac{3\pi}{2},2\pi\right)$, then $\cos(\theta)>0$. 
	It follows that $\lambda_j(\Re(e^{i\theta}A))=\cos(\theta)\lambda_j(A)$ for all $1\leq j\leq n$, and so for $1\leq k\leq n$,
	\[
	\sum_{j=1}^k \lambda_j(\Re(e^{i\theta}A))= \sum_{j=1}^k \cos(\theta)\lambda_j(A)=\cos(\theta)\sum_{j=1}^k \lambda_j(A).
	\]
	Thus, $\ell_{\geq 0}(\Re(e^{i\theta}A))=\ell_{\geq 0}(A)$. 
	Now, if $\theta\in\left(\frac{\pi}{2},\frac{3\pi}{2}\right)$, then $\cos(\theta)<0$ and $\lambda_j(\Re(e^{i\theta}A))=\cos(\theta)\lambda_{n-j+1}(A)= -\cos(\theta)\lambda_j(-A)$ for $1\leq j\leq n$. 
	For $1\leq k\leq n$, we have that
	\[
	\sum_{j=1}^k \lambda_j(\Re(e^{i\theta}A)) 
	= \sum_{j=1}^k \cos(\theta)\lambda_{n-j+1}(A) 
	= -\cos(\theta) \sum_{j=1}^k \lambda_j(-A).
	\]
	It follows that $\ell_{\geq 0}(\Re(e^{i\theta}A))=\ell_{\geq 0}(-A)$ for all $\theta\in\left(\frac{\pi}{2},\frac{3\pi}{2}\right)$.
	Hence, 
	\[
	\ell_{\geq 0}(\Re(e^{i\theta}A))=
	\begin{cases}
	\ell_{\geq 0}(A) \, &\text{ if } \theta\in \left[\left.0,\frac{\pi}{2}\right.\right)\cup \left(\frac{3\pi}{2},2\pi\right), \\
	n \, &\text{ if } \theta=\frac{\pi}{2} \text{ or } \frac{3\pi}{2}, \\
	\ell_{\geq 0}(-A) \, &\text{ if } \theta\in\left(\frac{\pi}{2},\frac{3\pi}{2}\right).
	\end{cases}
	\]
	By Theorem \ref{t index min}, 
	\begin{align*}
	t(A) &= \min\left\{\ell_{\geq 0}(\Re(e^{i\theta}A)):\theta\in[0, 2\pi)\right\}\\
	&= \min\{\ell_{\geq 0}(A),n,\ell_{\geq 0}(-A)\}\\
	&= \min\{\ell_{\geq 0}(A),\ell_{\geq 0}(-A)\}. \qedhere
	\end{align*} 
		\end{proof}
	
	In \cite[Corollary 2.3]{GauWangWu2014}, there is a second assertion that if $C\in M_n$ is Hermitian, then $i_0(C)\leq d(C)\leq \lfloor (n+i_0(C))/2 \rfloor$.
	This does not hold for $t(C)$ since $l_0(C)\neq l_0(-C)$. 
	For example, take $C=\text{diag}(0,1,-1,-1)$ from Example \ref{example}. Then $\lfloor \frac{n+i_0(C)}{2} \rfloor=\lfloor \frac{4+1}{2}\rfloor=2$. By \cite[Corollary 2.3]{GauWangWu2014}, $d(C)=2$. Whereas $t(C)=\min \{3,4\}=3>2$ by Corollary \ref{t(a) Hermitian}. \\
	
	The next result shows a characterization that reduces the computation of the zero-trace index of an arbitrary matrix to that of Hermitian matrices.

	\begin{corollary}\label{t(a) reduce}
	For any $A\in M_n$, we have 
	\[
	t(A)=\min\left\{t(\textup{Re}(e^{i\theta}A)) : \theta\in[0,2\pi)\right\}.
	\]
	\end{corollary}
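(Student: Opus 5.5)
We will derive the identity from Theorem \ref{t index min} together with Corollary \ref{t(a) Hermitian}, by computing $t(\Re(e^{i\theta}A))$ for each fixed $\theta$ and then taking the minimum over $\theta$. Fix $\theta\in[0,2\pi)$ and write $H_\theta=\Re(e^{i\theta}A)$, which is Hermitian. Corollary \ref{t(a) Hermitian} gives
\[
t(H_\theta)=\min\{\ell_{\geq 0}(H_\theta),\,\ell_{\geq 0}(-H_\theta)\}.
\]
The key observation is that $-H_\theta=\Re(e^{i(\theta+\pi)}A)$. So the set $\{\ell_{\geq 0}(\pm H_\theta):\theta\in[0,2\pi)\}$ coincides with $\{\ell_{\geq 0}(\Re(e^{i\theta}A)):\theta\in[0,2\pi)\}$, where $\theta+\pi$ is taken mod $2\pi$.

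From this the two inequalities follow. For each $\theta$ we have $t(H_\theta)\geq \min\{\ell_{\geq 0}(\Re(e^{i\phi}A)):\phi\}=t(A)$ by Theorem \ref{t index min}, since both $\ell_{\geq 0}(H_\theta)$ and $\ell_{\geq 0}(H_{\theta+\pi})$ appear in that minimum. Hence $\min_\theta t(H_\theta)\geq t(A)$. Conversely, pick $\theta_0$ attaining the minimum in Theorem \ref{t index min}; the theorem asserts a minimum, which exists because $\ell_{\geq 0}$ takes only integer values in $\{0,\dots,n\}$. Then
\[
t(H_{\theta_0})\leq \ell_{\geq 0}(H_{\theta_0})=t(A),
\]
so $\min_\theta t(H_\theta)\leq t(A)$.

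Nothing here is difficult. The only points needing care are that the minimum is attained, which follows from integrality, and the identity $\Re(e^{i(\theta+\pi)}A)=-\Re(e^{i\theta}A)$, which follows directly from the definition of the real part. An alternative route would use Theorem \ref{Wk as int} together with the fact that $W_k(H_\theta)=\Re(e^{i\theta}W_k(A))$ is a real interval. The spectral route above is shorter, so we will use it.
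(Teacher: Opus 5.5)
Your proof is correct and follows the paper's argument. You apply Corollary \ref{t(a) Hermitian} to each $\Re(e^{i\theta}A)$, use $-\Re(e^{i\theta}A)=\Re(e^{i(\theta+\pi)}A)$ to show the minimum over $\pm$ only re-indexes the family $\{\ell_{\geq 0}(\Re(e^{i\theta}A))\}$, and conclude with Theorem \ref{t index min}; the paper writes this as one chain of equalities instead of two inequalities.
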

	\begin{proof}
	Since $\Re(e^{i\theta}A)$ is Hermitian, it follows from Theorem \ref{t index min} and Corollary \ref{t(a) Hermitian} that
	\begin{align*}
	&\min\left\{t(\Re(e^{i\theta}A)) : \theta\in[0,2\pi)\right\}\\
	&= \min\left\{\min\{\ell_{\geq 0}(\Re(e^{i\theta}A)), \ell_{\geq 0}(-\Re(e^{i\theta}A))\} : \theta\in[0,2\pi)\right\}\\
	&= \min\left\{\min\{\ell_{\geq 0}(\Re(e^{i\theta}A)), \ell_{\geq 0}(\Re(e^{i(\theta+\pi)}A))\} : \theta\in[0,2\pi)\right\}\\
	&= \min\left\{\ell_{\geq 0}(\Re(e^{i\theta}A)) : \theta\in[0,2\pi)\right\}\\
	&= t(A). \qedhere
	\end{align*} 
	\end{proof}

	As a consequence, an alternative proof for Proposition \ref{t(a+0)} is given below.\\
	
	Let $C\in M_n$ be Hermitian. Observe that
	\[
	\ell_{+}(C\oplus 0_m) = 
	\begin{cases}
	\ell_{+}(C)+m & \text{ if } i_{+}(C)\geq 1,\\
	\ell_+(C)=0 & \text{ if } i_{+}(C)=0,
	\end{cases}
	\]
	and 
	\[
	\ell_{0}(C\oplus 0_m) = 
	\begin{cases}
	\ell_{0}(C) & \text{ if } i_{+}(C)\geq 1,\\
	\ell_0(C)+m & \text{ if } i_{+}(C)=0.
	\end{cases}
	\]
	So, $\ell_{\geq 0}(C\oplus 0_m)=\ell_{\geq 0}(C)+m$. By Corollary \ref{t(a) reduce}, we have
	\begin{align*}
	t(A\oplus 0_m) &= \min \left\{\ell_{\geq 0}(\Re(e^{i\theta}(A\oplus 0_m)))\,:\, \theta\in[0,2\pi)\right\}\\
	&= \min \left\{\ell_{\geq 0}(\Re(e^{i\theta}A)\oplus 0_m)\,:\, \theta\in[0,2\pi)\right\}\\
	&= \min \left\{\ell_{\geq 0}(\Re(e^{i\theta}A) +m\,:\, \theta\in[0,2\pi)\right\} \\
	&= \min \left\{\ell_{\geq 0}(\Re(e^{i\theta}A) \,:\, \theta\in[0,2\pi)\right\}+m\\
	&= t(A)+m. 
	\end{align*}

\section{Zero-trace index of classes of matrices}
	
	The zero-dilation index has been determined for several important classes of matrices.
	Gau, Wang, and Wu \citep{GauWangWu2014} proved that if $A\in M_n$ is normal  with $k=\dim \text{ker}(A)$, then 
	\[
	k\leq d(A)\leq \left\lfloor \frac{n+k}{2}\right\rfloor,
	\] 
	and characterized the matrices attaining  equality on both sides. They also showed that any weighted permutation matrix (with zero diagonals) satisfies $d(A)\leq \lceil \frac{n}{2}\rceil$.
	Gau and Wu later obtained analogous results for $S_n$- and companion matrices \cite{GauWu2016}. 
	Recall that for a monic polynomial \( p(z) = z^{n} + a_{1}z^{n-1} + \cdots + a_{n-1}z + a_{n},\) its associated \emph{companion matrix} is defined as
	\[
	\begin{bmatrix}
	0 & 1 & & & \\
	& 0 & 1 & & \\
	& & \ddots & \ddots & \\
	& & & 0 & 1 \\
	-a_{n} & -a_{n-1} & \cdots & -a_{2} & -a_{1}
	\end{bmatrix},
	\]
	while a matrix $A\in M_n$ is called an \emph{$S_{n}$-matrix} if it is a contraction 
	\(
	\|A\| := \max_{x \in \mathbb{C}^{n}\setminus 	\{0\}} \frac{\|Ax\|}{\|x\|} \leq 1,
	\)
	has all its eigenvalues in the open unit disc $\{z\in\mathbb{C} : |z|< 1\}$, and satisfies $\text{rank}(I_{n} - A^{*}A) = 1$. 
	Additionally, the zero-dilation index was computed \citep{GauWu2014a} for the class of \emph{Kac-Murdock-Szegö (KMS) matrices}, defined for $a\in\mathbb{C}$ and $n\geq 1$ as
	\[
	\begin{bmatrix}
		0 & a & a^2 & \cdots & a^{n-1}\\
		 & 0 & a & \ddots & \vdots\\
		 & & \ddots & \ddots & a^2\\
		 & & & \ddots & a\\
		0 & & & & 0
		\end{bmatrix}.
	\]
	
	Combining these results with those established in the preceding section, we obtain the corresponding zero-trace indices in the following proposition.
	
	\begin{proposition}
	Let $A\in M_n$.
	\begin{enumerate}[(i)]
	\item If $A$ is normal with $k=\dim \ker(A)$, then $k\leq d(A)\leq t(A)\leq n$. 
	Moreover, if $A\neq 0_n$ and $\lambda_1,\dots,\lambda_{n-k}$ are the nonzero eigenvalues of $A$, then $t(A)=k$ if and only if $d(A)=k$ if and only if $0\notin\textup{conv}\{\lambda_1,\dots,\lambda_{n-k}\}$. 
	
	\item If $A$ is a KMS matrix or a weighted permutation matrix (with zero diagonals), then $t(A)=n$.
	
	\item Let $A$ be a companion matrix associated with a polynomial $z^{n} + a_{1}z^{n-1} + \cdots + a_{n-1}z + a_{n}$. Then $t(A)=n$ if $a_1=0$, and $t(A)=n-1$ if $a_1\neq 0$.
	
	\item Let $A$ be an $S_n$-matrix such that $d(A)=\lceil \frac{n}{2} \rceil$. Then $t(A)=n$ if $n$ is odd, and $t(A)\geq n-1$ if $n$ is even. 
	
	\item Let $J_n(\lambda)$ be the $n\times n$ Jordan block with eigenvalue $\lambda\in\mathbb{C}$. Then for $n\geq 2$, 
	\[
	t(J_n(\lambda))=\begin{cases}
	n & \text{ if } \lambda=0, \\
	k & \text{ if } \frac{1}{k+1}\sum_{j=1}^{k+1} \cos\left(\frac{j\pi}{n+1}\right) < |\lambda|\leq \frac{1}{k}\sum_{j=1}^k \cos\left(\frac{j\pi}{n+1}\right),\\
	  & \text{ where }  1\leq k< n,\\
	0 & \text{ if } |\lambda|>\cos\left(\frac{\pi}{n+1}\right).
	\end{cases}	
	\]
	\end{enumerate}
	\end{proposition}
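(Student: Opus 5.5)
Each part reduces to a result already in the paper, combined with a known fact about the zero-dilation index; I treat the parts in order.

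For (i), I combine $k\le d(A)$ from \cite{GauWangWu2014} with Proposition \ref{t(A) properties} (i) and the trivial bound $t(A)\le n$. For the equivalences, assume $A\neq 0_n$. If $t(A)=k$, then $k\le d(A)\le t(A)=k$ forces $d(A)=k$. By \cite[Theorem 4.1 (a)]{GauWangWu2014}, $d(A)=k$ is equivalent to $0\notin\textup{conv}\{\lambda_1,\dots,\lambda_{n-k}\}$.

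It remains to show that this convex-hull condition gives $t(A)=k$. If $k=0$, then $W(A)=\textup{conv}\{\lambda_1,\dots,\lambda_n\}$ does not contain $0$, so $t(A)=0$ by the Remark. If $k\ge 1$, then $W(A)=\textup{conv}(\{0\}\cup\{\lambda_1,\dots,\lambda_{n-k}\})$. Since $0$ is not in the hull of the other eigenvalues, $0$ is an extreme point of $W(A)$. Theorem \ref{0 extreme point} then gives $t(A)=d(A)=k$.

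For (ii), a KMS matrix and a weighted permutation matrix with zero diagonal both have trace $0$, so $t(A)=n$ by Proposition \ref{t(A) properties} (ii).

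For (iii), the companion matrix has $\textup{tr}(A)=-a_1$. If $a_1=0$, Proposition \ref{t(A) properties} (ii) gives $t(A)=n$. If $a_1\neq 0$, I observe that $-a_1=e_n^*Ae_n\in W(A)$, so $\textup{tr}(A)\in W(A)$ with $\textup{tr}(A)\neq 0$. Theorem \ref{t n-1} then yields $t(A)=n-1$.

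For (iv), I plan to use Gau and Wu's characterization \cite{GauWu2016} of $S_n$-matrices with $d(A)=\lceil n/2\rceil$ in terms of the eigenvalues. An $S_n$-matrix is determined up to unitary similarity by its eigenvalues. For odd $n$, I expect the condition to say that the eigenvalues are symmetric about $0$. Then $A$ is unitarily similar to $-A$, and Proposition \ref{t(A) properties} (iv) gives $t(A)=n$. For even $n$, the goal is $\textup{tr}(A)\in W(A)$, so that Theorem \ref{t n-1} (or Proposition \ref{t(A) properties} (ii) if the trace vanishes) gives $t(A)\ge n-1$. I would try to get this from the structure $A\cong\begin{bmatrix}0_{n/2}&B\\ C&D\end{bmatrix}$, where $\textup{tr}(A)=\textup{tr}(D)$, together with the large numerical range of $S_n$-matrices, or directly from the eigenvalue description in \cite{GauWu2016}. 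This even case is the step I expect to be the main obstacle.

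For (v), write $J_n(\lambda)=\lambda I_n+J_n(0)$. By Theorem \ref{prop of Wk} (i), $W_k(J_n(\lambda))=k\lambda+W_k(J_n(0))$. Since $J_n(0)$ is unitarily similar to $e^{i\theta}J_n(0)$ for every $\theta$, the matrix $\textup{Re}(e^{i\theta}J_n(0))$ has eigenvalues $\cos\left(\frac{j\pi}{n+1}\right)$, $1\le j\le n$, independent of $\theta$. Theorem \ref{Wk as int} therefore shows that $W_k(J_n(0))$ is the closed disc centered at $0$ with radius $c_k=\sum_{j=1}^k\cos\left(\frac{j\pi}{n+1}\right)$. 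Hence $0\in W_k(J_n(\lambda))$ if and only if $|\lambda|\le c_k/k$.

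The sequence $c_k/k$ is a running average of a strictly decreasing sequence, so it is strictly decreasing in $k$. Also $c_n=0$ and $c_1/1=\cos\left(\frac{\pi}{n+1}\right)$. Therefore $t(J_n(\lambda))$ is the largest $k$ with $|\lambda|\le c_k/k$, which gives exactly the stated cases: $t=n$ iff $\lambda=0$, $t=0$ iff $|\lambda|>\cos\left(\frac{\pi}{n+1}\right)$, and $t=k$ for $1\le k<n$ precisely on the stated annuli.
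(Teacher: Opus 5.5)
Parts (i), (ii), (iii) and (v) are correct. Some of them take slightly different routes from the paper:
\begin{itemize}
\item In (i) you conclude via Theorem~\ref{0 extreme point}, using that $0$ is an extreme point of $W(A)=\textup{conv}\{0,\lambda_1,\dots,\lambda_{n-k}\}$. The paper instead splits off $0_k$ and applies Proposition~\ref{t(a+0)}.
\item In (iii), your observation $\textup{tr}(A)=e_n^*Ae_n\in W(A)$ combined with Theorem~\ref{t n-1} is the same fact as the paper's remark that the leading $(n-1)\times(n-1)$ compression is $J_{n-1}(0)$, which has trace zero.
\item In (v), your argument that $J_n(0)$ is unitarily similar to $e^{i\theta}J_n(0)$, together with Theorem~\ref{Wk as int}, justifies the disc description of $W_k(J_n(0))$. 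The paper asserts that description without comment.
\end{itemize}

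The genuine gap is the even case of (iv), which you leave open. Your reduction to $\textup{tr}(A)\in W(A)$ is correct, but the route you sketch cannot work as stated. The block form $\begin{bmatrix}0_{n/2}&*\\ *&*\end{bmatrix}$ only encodes $d(A)=n/2$, and that alone does not give $\textup{tr}(A)\in W(A)$. For example, $C=\textup{diag}(5,5,-1,-1)$ has $d(C)=2$, yet $\textup{tr}(C)=8\notin W(C)=[-1,5]$ and $t(C)=2$. From the block form, convexity only yields $\frac{2}{n}\textup{tr}(A)\in W(A)$, not $\textup{tr}(A)\in W(A)$. Nor can you appeal to a \emph{large} numerical range, since the numerical range of an $S_n$-matrix lies in the open unit disc.

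So a genuinely $S_n$-specific input is required. The paper uses the structural result \cite[Theorem 2.3]{GauWu2016}: for even $n$, an $S_n$-matrix with $d(A)=n/2$ has a compression $B\in M_{n-1}$ that is unitarily similar to $-B$. Then $\textup{tr}(B)=0$, so $0\in W_{n-1}(A)$ and $t(A)\geq n-1$ immediately.

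Similarly, for odd $n$ you need not guess the form of the eigenvalue condition. \cite[Theorem 2.2]{GauWu2016} states directly that $A$ is unitarily similar to $-A$, and Proposition~\ref{t(A) properties}~(iv) then gives $t(A)=n$.
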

	
	\begin{proof}
	\begin{enumerate}[(i)]
	\item If $A=0_n$, then $d(A)=t(A)=n$. 
	Suppose $A\neq 0_n$ is normal and $\lambda_1,\dots,\lambda_{n-k}$ its nonzero eigenvalues . Then $A$ is unitarily similar to $\text{diag}(\lambda_1,\dots,\lambda_{n-k})\oplus 0_k$. 
	Suppose $d(A)=k$. This is equivalent to $0\notin\textup{conv}\{\lambda_1,\dots,\lambda_{n-k}\}$ by \cite[Theorem 4.1]{GauWangWu2014}. 
	Hence, $t(A)=t(\text{diag}(\lambda_1,\dots,\lambda_{n-k}))+k=k$ by Proposition \ref{t(a+0)}.
	Conversely, if $t(A)=k$, then $d(A)=k$ by \cite[Theorem 4.1]{GauWangWu2014} and Proposition \ref{t(A) properties} (i). 	
	
	\item A KMS matrix and a weighted permutation matrix with zero diagonals are of trace 0. Hence, $t(A)=n$ by Proposition \ref{t(A) properties} (ii).
	
	\item If $A$ is a companion matrix associated with a polynomial $z^{n} + a_{1}z^{n-1} + \cdots + a_{n-1}z + a_{n}$, then the Jordan block $J_{n-1}(0)$ is a compression of $A$ with trace 0. So $t(A)\geq n-1$. If $a_1=0$, then $\tr(A)=0$ and $t(A)=n$. Otherwise, if $a_1\neq 0$, then $t(A)=n-1$.	
	
	\item Suppose $d(A)=\lceil\frac{n}{2}\rceil$. If $n$ is odd, then $A$ is unitarily similar to $-A$ by \cite[Theorem 2.2]{GauWu2016}. Hence, $t(A)=n$ by Proposition \ref{t(A) properties} (iv). If $n$ is even, then $A$ has a compression $B\in M_{n-1}$ such that $B$ is unitarily similar to $-B$ by \cite[Theorem 2.3]{GauWu2016}. By Proposition \ref{t(A) properties} (iv), $t(A)\geq n-1$.
	
	\item If $\lambda=0$, then $\tr(J_n(0))=0$ and so $t(J_n(0))=n$ by Proposition \ref{t(A) properties} (ii).
	Suppose $\lambda\neq 0$. Note that 
	$W_k(J_n(\lambda))=W_k(\lambda I_n+J_n(0))=k\lambda+W_k(J_n(0))$ by Theorem \ref{prop of Wk} (i), where $W_k(J_n(0))$ is the circular disk centered at the origin with radius equal to $\sum_{j=1}^k \lambda_j(\Re( J_n(0)))$. The eigenvalues of $\Re(J_n(0))$ are $\lambda_j(\Re( J_n(0)))=\cos\left(\frac{j\pi}{n+1}\right)$ for $1\leq j\leq n$ \cite{GauWu2014a}. Thus,
	\begin{align*}
	0\in W_k(J_n(\lambda)) &\iff 0\in k\lambda + W_k(J_n(0))\\
	&\iff 0\in k\lambda+\left\{z\in \mathbb{C}:  |z|\leq \sum_{j=1}^k \cos\left(\frac{j\pi}{n+1}\right)\right\}\\
	&\iff -k\lambda \in \left\{z\in \mathbb{C}:  |z|\leq \sum_{j=1}^k \cos\left(\frac{j\pi}{n+1}\right)\right\}\\
	&\iff |\lambda|\leq \frac{1}{k}\sum_{j=1}^k \cos\left(\frac{j\pi}{n+1}\right).
	\end{align*}
	Note that $\frac{1}{k}\sum_{j=1}^k \cos\left(\frac{j\pi}{n+1}\right) > \frac{1}{k+1}\sum_{j=1}^{k+1} \cos\left(\frac{j\pi}{n+1}\right)$ for $1\leq k\leq n$. 
	Indeed, 
	\begin{align*}
	0 &< \frac{1}{k+1}\left[\cos\left(\frac{k\pi}{n+1}\right)-\cos\left(\frac{(k+1)\pi}{n+1}\right)\right]\\
	&\leq \frac{1}{k+1}\left[\frac{1}{k}\sum_{j=1}^k \cos\left(\frac{j\pi}{n+1}\right)-\cos\left(\frac{(k+1)\pi}{n+1}\right)\right]\\
	&= \left(\frac{1}{k}-\frac{1}{k+1}\right)\sum_{j=1}^k \cos\left(\frac{j\pi}{n+1}\right)-\frac{1}{k+1}\cos\left(\frac{(k+1)\pi}{n+1}\right)\\
	&= \frac{1}{k}\sum_{j=1}^k \cos\left(\frac{j\pi}{n+1}\right)- \frac{1}{k+1}\sum_{j=1}^{k+1} \cos\left(\frac{j\pi}{n+1}\right).
	\end{align*}
	Thus, if $|\lambda|>\cos\left(\frac{\pi}{n+1}\right)$, then $0\notin W(J_n(\lambda))$ and so $t(J_n(\lambda))=0$. If $|\lambda|\leq \cos\left(\frac{\pi}{n+1}\right)$, then $t(J_n(\lambda))=k$ if and only if $\frac{1}{k+1}\sum_{j=1}^{k+1} \cos\left(\frac{j\pi}{n+1}\right) < |\lambda|\leq \frac{1}{k}\sum_{j=1}^k \cos\left(\frac{j\pi}{n+1}\right)$. \qedhere
	\end{enumerate}
	\end{proof}
	
\section*{Declaration of Competing Interest}
There is no competing interest.

\section*{Acknowledgements}
The authors acknowledge the Office of the Chancellor of the University of the Philippines Diliman, through the Office of the Vice Chancellor for Research and Development, for funding support through the Outright Research Grant (262604 ORG).



\end{document}